\newtheorem{thm}{Theorem}[section]
\newtheorem{cor}[thm]{Corollary}
\newtheorem{lemma}[thm]{Lemma}
\theoremstyle{definition}
\newtheorem{definition}[thm]{Definition}
\newtheorem{ex}[thm]{Example}
\newtheorem{remark}[thm]{Remark}
\newtheorem{question}[thm]{Question}
\def\rquotient#1#2{%
	\makeatletter
	\raise.3ex\hbox{$#1$}/\lower.3ex\hbox{$#2$}%
	\makeatother
}	
\newcommand{\subjclass}[2][2010]{%
	\let\@oldtitle\@title%
	\gdef\@title{\@oldtitle\footnotetext{#1 \emph{Mathematics subject classification.} #2}}%
}
\newcommand{\keywords}[1]{%
	\let\@@oldtitle\@title%
	\gdef\@title{\@@oldtitle\footnotetext{\emph{Key words and phrases.} #1.}}%
}
\newcommand{\Address}{{% additional braces for segregating \footnotesize
		\bigskip
		\small
		
\noindent
		\textsc{Laboratoire de Math\'ematiques Nicolas Oresme UMR CNRS 6139\\ Universit\' e de Caen Normandie \\ 14000 Caen, France}\par\nopagebreak
		\textit{E-mail address}: \texttt{paolo.bellingeri@unicaen.fr}

\medskip \noindent
		\textsc{Universit\'e de Montpellier\\ 
Institut Math\'ematiques Alexander Grothendieck\\
Place Eug\`ene Bataillon\\
34090 Montpellier (France)}\par\nopagebreak
		\textit{E-mail address}: \texttt{anthony.genevois@umontpellier.fr}

\medskip \noindent
		\textsc{Laboratoire de Math\'ematiques Nicolas Oresme UMR CNRS 6139\\ Universit\' e de Caen Normandie \\ 14000 Caen, France}\par\nopagebreak
		\textit{E-mail address}: \texttt{neha.nanda@unicaen.fr}
		
}}
\title{Right-angled Artin groups are symmetric diagram groups}
\date{\today}
\author{Paolo Bellingeri, Anthony Genevois, and Neha Nanda}
\subjclass[2020]{Primary 20F65, 20F36 ; Secondary 05C25}
\keywords{Diagram group, symmetric diagram group, annular diagram group, Thompson's group, twin group, virtual twin group, annular twin group}
\begin{document}
\maketitle

\begin{abstract}
In this article, we show that, for every $n \geq 2$, the pure virtual twin group $PVT_n$ can be naturally described as a symmetric diagram group, a family of groups introduced by V. Guba and M. Sapir and associated to semigroup presentations. Inspired by this observation, we prove that every finitely generated right-angled Artin group is a symmetric diagram group. This contrasts with the fact that not all right-angled Artin groups are planar diagram groups.
\end{abstract}

\section{Introduction}

\noindent
Guba-Sapir extended in \cite{MR1396957} combinatorial group theory to  the realm of diagram groups. One of the most motivating examples was Thompson's group $F$, the group of all piecewise-linear orientation-preserving homeomorphisms of the interval whose break points are dyadic integers, and whose values in the break points are also dyadic integers. The study of diagram groups lead to new results about Thompson's groups $F$. Since then, these groups have been studied substantially, in particular, towards investigating properties of other groups which can be viewed as diagram groups or their subgroups. Framework on this subject includes computational problems such as the word, conjugacy and
commutation problems, finiteness properties, homology,
orderability and different  geometries, to name a few. We refer to the recent survey article \cite{Diagram-Groups-Genevois} for a detailed account.\\
Interestingly, diagram groups have dissimilar yet equivalent interpretations. One of these alternative definitions allows us to define different  generalisations of diagram groups. For distinction, classical diagram groups, from now on referred as \textit{planar diagram groups}, have
their elements represented as pictures on the  plane consisting of non-intersecting wires and transistors. A similar definition on annuli yields \textit{annular diagram group}, and
allowing wires to cross in an arbitrary manner yields \textit{symmetric diagram groups} \cite{MR1396957, MR2136028}. These extensions mimic the case of Thompson's groups $F \subset T \subset V$. That is, for every semigroup presentation $\mathcal{P}= \langle \Sigma \mid \mathcal{R} \rangle$ and every non-empty word $w \in \Sigma$, we define three nested groups $D_p(\mathcal{P},w) \subset D_a(\mathcal{P},w) \subset D_s(\mathcal{P},w)$, where $D_p(\mathcal{P},w)$ (resp. $D_a(\mathcal{P},w)$, $D_s(\mathcal{P},w)$) is a planar (resp. annular, symmetric) diagram group. \\
The characterisation of groups which are (planar) diagram groups is widely open. And, usually, it is difficult to determine whether or not a given group can be described as a diagram group. The particular case of right-angled Artin groups is interesting. It is known that not every right-angled Artin group is a planar diagram group \cite{MR1725439}, but a complete classification is unknown. This motivates us to investigate the relation between right-angled Artin groups and symmetric diagram groups. In the symmetric case, we prove that every right-angled Artin group turns out to be a symmetric diagram group.

\begin{thm}\label{thm:MainIntro}
Every finitely generated right-angled Artin group is a symmetric diagram group.
\end{thm}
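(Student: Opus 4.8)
The plan is to produce, for each finite simplicial graph $\Gamma$, an explicit semigroup presentation $\mathcal{P}_\Gamma$ and non-empty base word $w_\Gamma$ such that $D_s(\mathcal{P}_\Gamma,w_\Gamma)\cong A(\Gamma)$, the right-angled Artin group on $\Gamma$; the guiding example throughout is $PVT_n$, whose natural ``strand picture'' supplies a presentation of the required shape, and the point is to isolate which features of that picture survive for arbitrary $\Gamma$. First I would recall the description of $D_s(\mathcal{P},w)$ as the fundamental group of the symmetric Squier complex, equivalently as the group of symmetric diagrams over $\mathcal{P}$ with top and bottom $w$ modulo dipole reduction, Reidemeister-type and far-commutativity moves for the crossings, independence of disjoint transistors and crossings, and isotopy — in particular a transistor may slide along its strands past crossings in which those strands are involved, a move absent from the planar theory. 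An immediate consequence is that loops built only from crossings are trivial, since the Reidemeister and far-commutativity moves are precisely the defining relations of the relevant symmetric group.

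The engine of the argument is a \emph{separation lemma}: in any symmetric diagram group $D_s(\mathcal{P},w)$, two loops whose supports (the sets of strands of $w$ that they touch) are disjoint must commute. Indeed, conjugating by a word in crossings realizing a suitable permutation of the strands, one moves the two supports into two disjoint subintervals of $w$, where commutation is the usual planar statement; conjugating back gives the claim. This is exactly the extra flexibility of symmetric diagram groups over planar ones, and it is what permits arbitrary commutation patterns.

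For the construction I would first realize the \emph{complement} $\overline{\Gamma}$ as an intersection graph: choose a finite set $U$ and pairwise distinct subsets $S_1,\dots,S_n\subseteq U$, one per vertex of $\Gamma$, with $S_i\cap S_j\neq\emptyset$ if and only if $\{i,j\}\notin E(\Gamma)$ — always possible, e.g.\ $U=E(\overline{\Gamma})\sqcup V(\Gamma)$ and $S_i=\{e\in E(\overline{\Gamma}):i\in e\}\cup\{i\}$. Take one letter $z_u$ for each $u\in U$, let $w_\Gamma$ be a fixed word spelling each $z_u$ once, and for each vertex $i$ add a relation $r_i$ of the form $(\text{the }S_i\text{-letters in a fixed order})=(\text{the same word})$, possibly with a fresh ``marker'' letter to rigidify the picture. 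For each $i$ let $g_i\in D_s(\mathcal{P}_\Gamma,w_\Gamma)$ be the loop that uses crossings to gather the $S_i$-strands into a window, applies $r_i$ there, and reverses the crossings. Then: (i) if $\{i,j\}\in E(\Gamma)$ then $S_i\cap S_j=\emptyset$, so $g_i$ and $g_j$ have disjoint supports and commute by the separation lemma, whence $x_i\mapsto g_i$ defines a homomorphism $\varphi\colon A(\Gamma)\to D_s(\mathcal{P}_\Gamma,w_\Gamma)$; (ii) $\varphi$ is surjective, because the group is generated by atomic loops, the crossing-only ones are trivial, and any single-transistor loop is carried to the corresponding $g_i$ by sliding the transistor along its strands through the ambient crossings; (iii) $\varphi$ is injective, via the retraction $\psi\colon D_s(\mathcal{P}_\Gamma,w_\Gamma)\to A(\Gamma)$ sending each application of $r_i$ to $x_i$ and every crossing to $1$. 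For $\psi$ to be well defined one checks it kills every defining relation; the only delicate case is an independence square between applications of $r_i$ and $r_j$, which forces the two windows — hence $S_i$ and $S_j$ — to be disjoint, i.e.\ $\{i,j\}\in E(\Gamma)$, so that $[x_i,x_j]=1$ does hold in $A(\Gamma)$. Since $\psi\circ\varphi=\mathrm{id}$, we get $D_s(\mathcal{P}_\Gamma,w_\Gamma)\cong A(\Gamma)$.

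The main obstacle I anticipate is step (ii) together with the precise bookkeeping in step (iii): one must fix a workable presentation of the symmetric diagram group (generators: transistors and crossings; relations: dipoles, Reidemeister and far-commutativity moves, independence of disjoint cells, and the isotopies moving a transistor past crossings of its own strands) and verify that, against this list, (a) every atomic loop is conjugate through crossings to a single one of the $g_i$, so that the $g_i$ generate, and (b) $\psi$ respects all relations. Point (a) is the crux, because a priori an application of $r_i$ can occur in many positions along $w_\Gamma$; showing these are all identified is exactly what the extra isotopy of symmetric diagrams provides, and it is the reason the theorem holds in the symmetric but not the planar setting. The remaining ingredients — the intersection-graph representation of $\overline{\Gamma}$ and the separation lemma — are comparatively routine.
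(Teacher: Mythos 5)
Your first step --- realizing $\overline{\Gamma}$ as an intersection graph, equivalently $\Gamma$ as the disjointness graph of subsets $S_i\subseteq U$ --- is exactly the paper's Lemma~\ref{lem:DisjointGraph}, and your ``separation lemma'' is sound (the paper uses the stronger, literal statement that two diagrams whose transistors touch disjoint sets of wires are \emph{equal} as diagrams when composed in either order). But there is a genuine gap at the heart of the construction: the generator $g_i$ as you describe it does not have infinite order. If $r_i$ is a relation with the same word on both sides, it is excluded by the standing convention on semigroup presentations, and even if allowed, the resulting single-transistor diagram is its own inverse (its vertical mirror image is itself) and two stacked copies form a dipole, so $g_i^2=1$ and you obtain a right-angled \emph{Coxeter} group. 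If instead $r_i$ is $(\text{$S_i$-letters})=a_i$ for a fresh marker $a_i$ and $g_i$ applies it forward then backward, that composite is itself a dipole and $g_i=1$. Producing a copy of $\mathbb{Z}$ requires a genuine gadget: the paper first realizes $A(\Gamma)$ as a symmetric diagram \emph{product} with $\mathbb{Z}$ vertex groups carried on the $a_I$-wires (Theorem~\ref{thm:GraphProduct}), and then eliminates the vertex groups via the ``coil'' presentation $s=a_s,\ a_s=b_s,\ b_s=c_s,\ c_s=s$ of Theorem~\ref{thm:Combination}, so that the integer $k$ becomes a reduced chain of $3|k|$ transistors. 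Your ``fresh marker letter to rigidify'' points at the problem but does not solve it, and this is precisely the nontrivial idea your proof is missing.

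The second gap is that your surjectivity and injectivity arguments both rest on an explicit presentation of $D_s(\mathcal{P}_\Gamma,w_\Gamma)$ by generators (transistors, crossings) and relations (dipoles, Reidemeister-type moves, independence, transistor--crossing isotopies). No such presentation is established here or in the sources the paper relies on, and proving that your proposed list of relations is \emph{complete} is essentially equivalent to the normal-form analysis you are trying to avoid; in particular the well-definedness of the retraction $\psi$ and the claim that every atomic loop is conjugate through crossings to some $g_i$ cannot be checked ``against the list'' until the list is justified. The paper circumvents this entirely by arguing with reduced diagrams: since the auxiliary letters $a_I$ do not occur in the base word $x_1\cdots x_n$, every $a_I$-wire is trapped between a matched pair of transistors, which peels a generator off any reduced diagram and gives surjectivity by induction on the number of transistors, while injectivity follows from showing that graphically reduced words map to reduced diagrams. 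So the architecture of your argument would need to be reorganized along these lines, not merely tightened.
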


\noindent
As a nice application, it follows that there exist torsion-free symmetric diagram groups that are not planar diagram groups. It is clear that symmetric diagram groups may not be planar diagram groups, since the latter are torsion-free while the former may contain torsion. Examples include for instance Thompson's group $V$ or Houghton groups. However, whether torsion-free symmetric diagram groups are automatically planar diagram groups is not clear at first glance. Up to our knowledge, we exhibit the first example providing a negative answer.

\begin{cor}\label{Symmetric-Not-Planar}
There exist torsion-free symmetric diagram groups, such as right-angled Artin groups defined by cycles of odd length $\geq 5$, that are not planar diagram groups.
\end{cor}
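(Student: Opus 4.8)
The plan is to derive Corollary~\ref{Symmetric-Not-Planar} by combining Theorem~\ref{thm:MainIntro} with two further ingredients: one elementary, and one about non-examples of planar diagram groups. First, recall that right-angled Artin groups are torsion-free; for instance, each acts freely on the CAT(0) cube complex obtained as the universal cover of its Salvetti complex, and a group acting freely on a CAT(0) space cannot contain torsion. Consequently, for every $n \geq 5$ the right-angled Artin group $A_{C_n}$ of the $n$-cycle $C_n$ is torsion-free, and by Theorem~\ref{thm:MainIntro} it is a symmetric diagram group. So it only remains to isolate, among these groups, one that is not a planar diagram group.

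For this I would invoke the fact that $A_{C_n}$ is not a planar diagram group when $n$ is odd and $n \geq 5$ --- a statement recorded in \cite{MR1725439} at least in the case of the pentagon $C_5$. This is the one non-formal point, and I expect it to be the main obstacle, in two respects. On the one hand, one must make sure that whatever structural obstruction rules out $C_5$ --- a constraint on planar diagram groups that I would need to extract carefully from \cite{MR1725439} --- really applies to every long odd cycle, and not merely to $C_5$; if the general case is not already available in the literature, reproving it for arbitrary odd $n \geq 5$ would be the bulk of the work. On the other hand, it is worth recording why the hypotheses are sharp: $A_{C_4} \cong F_2 \times F_2$ is a planar diagram group (free groups are planar diagram groups, and the class is closed under direct products), so short cycles, and more generally graphs without induced long odd cycles, need not produce counterexamples.

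Finally, I would justify the assertion that these are the \emph{first} torsion-free examples of symmetric diagram groups that are not planar diagram groups. Since planar diagram groups are torsion-free \cite{MR1396957}, the presence of torsion already obstructs planarity, and the symmetric diagram groups previously known not to be planar --- Thompson's group $V$, the Houghton groups --- all contain finite subgroups of unbounded order, so planarity failed there for the cheap reason. The groups $A_{C_n}$ with $n$ odd and $n \geq 5$ therefore give the first separation of the two classes within the torsion-free setting, which is the content of the corollary.
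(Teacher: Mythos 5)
Your argument is correct and follows essentially the same route as the paper: Theorem~\ref{thm:MainIntro} gives that these right-angled Artin groups are symmetric diagram groups, torsion-freeness of right-angled Artin groups is standard, and non-planarity comes from \cite{MR1725439}. The one point you flag as a potential obstacle --- whether the obstruction extends from $C_5$ to all odd cycles of length $\geq 5$ --- is already settled: the proof of \cite[Theorem~30]{MR1725439} (as recorded in \cite[Corollary~3.19]{Diagram-Groups-Genevois} and used in Theorem~\ref{thm:NotPlanar}) shows that a right-angled Artin group whose defining graph contains an induced cycle of odd length $\geq 5$ is never a planar diagram group.
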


The fact that the right-angled Artin groups mentioned above are not planar diagram groups is proved in \cite{MR1725439}. See also Theorem~\ref{thm:NotPlanar}.

\medskip
Besides, this article explores the world of symmetric diagram groups for which not much is known. In this direction, we  consider in particular,  twin groups and their generalisations, and relate their pure subgroups to  planar, annular, and symmetric diagram groups. The term \textit{twin group} was coined in the work of Khovanov \cite{MR1370644}, and these groups appear in the literature under the names of Grothendieck cartographical groups \cite{MR1106916}, planar braid groups \cite{MR4170471}, traid groups \cite{MR4035955} and flat braids \cite{MR1738392}.   They are right-angled Coxeter groups which serve as planar analogues of classical Artin braid groups.  These groups relate to the doodles on $2$-sphere which are finite collections of closed curves that are required not to have triple intersections. The kernel of a natural map from twin group onto the symmetric group is known as \textit{pure twin group} $PT_n$ which is the fundamental group of real codimension two subspace arrangements. In other words, it is the fundamental group of configuration space of $n$ particles on a line such that no three particles can collide. Through \cite{Twin-Group-Diagram-Group,article3}, it is known that $PT_n$ is a planar diagram group.\\
From the motivation of virtual knots and braids, arose the virtual extension of twin groups \cite{MR4027588}. Similar to the pure twin group, the canonical subgroup of virtual twin group is called \textit{pure virtual twin group} $PVT_n$ which turns out to be  an irreducible right-angled Artin group \cite{Structure-Automorphisms-Pure-Virtual-Twin-Groups}. From the before mentioned results, we deduce that the pure virtual twin group is a symmetric diagram group. 
However, the information of the elements of these groups can be translated in terms of monotonic strings, real and virtual crossings on the plane. This allows us to give an alternative and clearer proof where the symmetric aspect of diagram group is evidently visible. We prove the following result. 

\begin{thm}\label{Main-Theorem-Pure-Virtual-Twin}
Let $\mathcal{P}_n= \langle x_1, x_2, \dots, x_n ~|~ x_i x_j =x_j x_i \text{ for all } i \neq j \rangle$ be a semigroup presentation. The symmetric diagram group $D_s(\mathcal{P}_n,x_1x_2 \cdots x_n )$ is isomorphic to the pure virtual twin group $PVT_n$. 
\end{thm}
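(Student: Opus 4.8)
The plan is to build an explicit ``redrawing'' bijection $\Phi$ between symmetric diagrams over $\mathcal{P}_n$ with top and bottom label $w:=x_1x_2\cdots x_n$ and pure virtual twin diagrams on $n$ strands, to show that it exchanges the defining equalities on the two sides, and to observe that it intertwines the group operations; recalling that $PVT_n$ may be described as the group of pure virtual twin diagrams on $n$ strands up to the flat and virtual Reidemeister moves, this gives the isomorphism. To construct $\Phi$, I would first analyse a symmetric diagram $\Delta\colon w\to w$ over $\mathcal{P}_n$. Since $w$ has pairwise distinct letters and each relation $x_ix_j=x_jx_i$ of $\mathcal{P}_n$ preserves, letter by letter, the content of any word it is applied to, one may follow each letter $x_k$ through $\Delta$ along a single strand $\gamma_k$ joining the unique occurrence of $x_k$ on the top to the unique occurrence of $x_k$ on the bottom; in particular every transistor of $\Delta$ carries some relation $x_ix_j=x_jx_i$, hence meets exactly $\gamma_i$ and $\gamma_j$ and interchanges their positions, while every other meeting of two strands comes from the symmetric structure. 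Drawing $\gamma_1,\dots,\gamma_n$ as $n$ monotone strands, with $\gamma_k$ running from the $k$-th slot on top to the $k$-th slot on bottom --- so that the underlying permutation of $\Delta$ is trivial, that is, $\Delta$ is \emph{pure} --- and declaring each transistor to be a classical crossing and each symmetric crossing to be a virtual crossing, one obtains a pure virtual twin diagram $\Phi(\Delta)$. Labelling the strands of a pure virtual twin diagram by $x_1,\dots,x_n$ according to their initial slots, and turning classical crossings into transistors and virtual crossings into symmetric crossings, provides an inverse, so $\Phi$ is a bijection.

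Next I would check that $\Phi$ matches the two equivalence relations. On the diagram side, equality of symmetric diagrams is generated by planar isotopy, by insertion and deletion of dipoles --- a dipole being a pair of transistors carrying the two readings $x_ix_j=x_jx_i$ and $x_jx_i=x_ix_j$ of one relation and joined only through strands --- and by the relations of the symmetric structure (two consecutive symmetric crossings cancel; symmetric crossings on three adjacent strands satisfy the braid relation; a symmetric crossing commutes with any cell acting on a disjoint pair of strands). Under $\Phi$ these pass, respectively, to planar isotopy of virtual twin diagrams, to the flat Reidemeister~II move cancelling two classical crossings on the same pair of strands, and to the virtual Reidemeister~II and~III moves together with far commutativity of virtual crossings; the remaining moves defining $VT_n$ --- far commutativity of a classical and a virtual crossing on disjoint strands, and the mixed Reidemeister~III (``detour'') move --- are also realised, because in a symmetric diagram a transistor may be slid along its two strands past any virtual crossing. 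Two points then need care, to guarantee that $\Phi$ forces \emph{no} extra identity. First, a dipole is made of two transistors and never of a transistor together with a symmetric crossing, so the forbidden mixed Reidemeister~II move (annihilation of a classical against a virtual crossing) does not occur; this is precisely what makes the target the pure \emph{virtual} twin group rather than a proper quotient of it. Second, there is no Reidemeister~III identity among classical crossings: the defining relations of $\mathcal{P}_n$ have two-letter sides with no nontrivial self-overlap, so no cubic relation among transistors on a single triple of strands is forced beyond those already carried by the virtual moves --- equivalently, the Squier complex of $\mathcal{P}_n$ contains no $2$-cell filling the hexagonal loop associated with a three-letter word. With these checks in hand, $\Phi$ descends to a bijection between $D_s(\mathcal{P}_n,w)$ and the set of pure virtual twin diagrams on $n$ strands up to the above moves, namely $PVT_n$.

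Finally, since stacking of symmetric diagrams passes strand by strand to stacking of virtual twin diagrams, $\Phi$ respects composition, so the induced bijection $D_s(\mathcal{P}_n,w)\to PVT_n$ is a group isomorphism; under it the standard generator $\lambda_{ij}$ of $PVT_n$ is the class of the symmetric diagram in which $\gamma_i$ is carried next to $\gamma_j$ by virtual crossings, crosses it once through a single transistor, and is carried back, and the defining commutations $[\lambda_{ij},\lambda_{kl}]=1$ for $\{i,j\}\cap\{k,l\}=\varnothing$ are visibly realised on diagrams. The hard part is the middle paragraph: one must carry out, carefully but essentially routinely, the case analysis showing that the moves available to symmetric diagrams over $\mathcal{P}_n$ are exactly the Reidemeister-type moves defining $PVT_n$ --- with nothing missing and, the more delicate half, nothing extra.
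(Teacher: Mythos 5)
Your proposal is correct and follows essentially the same route as the paper: the dictionary real crossing $\leftrightarrow$ transistor labelled by $x_ix_j=x_jx_i$, virtual crossing $\leftrightarrow$ crossing of wires, with dipole cancellation matching the flat Reidemeister~II move and wire isotopy (including sliding a transistor past a virtual crossing) matching the virtual and detour moves. Your middle paragraph is in fact more explicit than the paper's argument about the delicate half of the verification --- that no mixed Reidemeister~II or flat Reidemeister~III identity is forced on the diagram side --- but this is a refinement of the same proof, not a different one.
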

For completion, we also deduce from \cite{MR1725439} that the right-angled Artin group $PVT_n$ is not a planar diagram group, \ref{Symmetric-Not-Planar} providing another example illustrating Corollary~\ref{Symmetric-Not-Planar}. Quite recently, Mostovoy defined \textit{annular twin groups} which is motivated by considering configuration space of $n$ particles on a circle than a line, whose fundamental group corresponds to \textit{pure annular twin group} $aPT_n$. Farley \cite[Theorem 2]{Twin-Group-Diagram-Group} independently considered and defined pure annular twin groups in the sense of diagrams, and proved that $aPT_n$ is an annular diagram group. As an immediate corollary to Theorem \ref{Main-Theorem-Pure-Virtual-Twin}, we get the following non-trivial result.
\begin{cor}
The pure twin group embeds in the pure annular twin group, which further embeds in the pure virtual twin group.
\end{cor}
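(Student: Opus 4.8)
The plan is to realise all three groups as diagram groups over the single semigroup presentation $\mathcal{P}_n = \langle x_1, \dots, x_n \mid x_i x_j = x_j x_i \ (i \neq j) \rangle$ with base word $w = x_1 x_2 \cdots x_n$, and then to invoke the tautological nesting $D_p(\mathcal{P}_n, w) \subseteq D_a(\mathcal{P}_n, w) \subseteq D_s(\mathcal{P}_n, w)$ recorded in the introduction. Theorem~\ref{Main-Theorem-Pure-Virtual-Twin} already provides the identification $D_s(\mathcal{P}_n, w) \cong PVT_n$. The results of \cite{Twin-Group-Diagram-Group, article3} identify $D_p(\mathcal{P}_n, w)$ with the pure twin group $PT_n$, and \cite[Theorem~2]{Twin-Group-Diagram-Group} identifies $D_a(\mathcal{P}_n, w)$ with the pure annular twin group $aPT_n$. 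Once these three identifications are known to be over the \emph{same} presentation and base word, the nesting of planar inside annular inside symmetric diagram groups transports to a chain of injective homomorphisms $PT_n \hookrightarrow aPT_n \hookrightarrow PVT_n$, which is exactly the assertion.

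Concretely, I would first record the dictionary making all identifications compatible. A reduced $(w,w)$-diagram over $\mathcal{P}_n$ has its cells labelled by relations $x_i x_j = x_j x_i$; reading such a cell as the interchange of two adjacent strands turns a planar such diagram into a pure twin diagram on a disk, an annular one into a pure twin diagram on an annulus, and a symmetric one into a monotone virtual pure twin diagram, while dipole cancellation corresponds in each case to the defining relations of $PT_n$, $aPT_n$ and $PVT_n$ respectively. For the planar and symmetric ends this is precisely how $PT_n$ and $PVT_n$ arise (the latter being the content of the proof of Theorem~\ref{Main-Theorem-Pure-Virtual-Twin}); for the middle term it is Farley's description of $aPT_n$.

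The final step is then purely formal: the introduction records that, for fixed $(\mathcal{P}_n, w)$, the natural maps $D_p(\mathcal{P}_n, w) \to D_a(\mathcal{P}_n, w) \to D_s(\mathcal{P}_n, w)$ --- ``draw the same diagram on an annulus'', then ``cut the annulus and allow the resulting wires to cross'' --- are injective group homomorphisms, since dipole cancellation in the smaller class of diagrams is a special case of dipole cancellation in the larger class and no new coincidences are created. Composing these with the identifications above yields the desired embeddings; in fact each embedding is simply the restriction of the isomorphism $D_s(\mathcal{P}_n, w) \xrightarrow{\ \sim\ } PVT_n$ from Theorem~\ref{Main-Theorem-Pure-Virtual-Twin} to the subgroup of planar, respectively annular, diagrams.

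The only genuine obstacle is bookkeeping with conventions: one must check that the descriptions of $PT_n$ in \cite{Twin-Group-Diagram-Group, article3} and of $aPT_n$ in \cite[Theorem~2]{Twin-Group-Diagram-Group} are indeed taken over exactly the presentation $\mathcal{P}_n$ and base word $w$ appearing in Theorem~\ref{Main-Theorem-Pure-Virtual-Twin}, and not over some abstractly isomorphic but differently presented semigroup. Should the conventions differ, the clean fix is to bypass the cited identifications and read off the images of $D_p(\mathcal{P}_n, w)$ and $D_a(\mathcal{P}_n, w)$ directly under the explicit isomorphism of Theorem~\ref{Main-Theorem-Pure-Virtual-Twin}: its restriction to planar diagrams has image the subgroup of $PVT_n$ carried by pure twin diagrams on a disk, namely $PT_n$, and its restriction to annular diagrams has image the subgroup carried by pure twin diagrams on an annulus, namely $aPT_n$, whence the two embeddings at once.
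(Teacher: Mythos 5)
Your proposal is correct and follows essentially the same route as the paper: identify $PT_n$, $aPT_n$, and $PVT_n$ as the planar, annular, and symmetric diagram groups over the single presentation $\mathcal{P}_n$ with base word $x_1\cdots x_n$ (citing Farley for the first two and Theorem~\ref{Main-Theorem-Pure-Virtual-Twin} for the last), and then invoke the nesting $D_p \subset D_a \subset D_s$. Your additional caution about matching conventions across the cited identifications, with the fallback of reading off the planar and annular subgroups directly through the explicit isomorphism, is a sensible elaboration of the same argument rather than a different one.
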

It is worth mentioning that it is not a trivial question to ask if a group embeds in its virtual extension. For example, we can prove  the braid groups embed in virtual braid groups (and  similarly that $T_n$ embeds  in $VT_n$) using classical result  on embeddings between parabolic subgroups of Artin and Coxeter groups (see for instance \cite{MR3519103}).

In summary, symmetric diagrams   can be roughly viewed as a virtual extension of planar diagrams, in the sense of knots and virtual knots,  braids and virtual braids, twin groups and virtual twin groups. More specifically,  as virtual (pure) braids provide all possible crossings between different strands, symmetric diagrams realize all possible combinations of wires. Recently in \cite{Virtual-Artin-Groups}, it was pointed out that virtual braid groups can be also realised mimicking the action of the symmetric group on its root system. An interesting consequence is that we can obtain this way  the usual presentation of pure virtual braid groups, where generators are in one to one correspondence with simple roots of the symmetric group.  It seems interesting
to investigate if symmetric diagrams can be similarly defined in terms of underlying 
actions of the symmetric group on the geometry of the corresponding planar diagram group.

\medskip

\textbf{Acknowledgements.} First author was partially supported by the ANR project AlMaRe (ANR-19-CE40-0001).The third author has received funding from European Union's Horizon Europe Research and Innovation programme under the Marie Sklodowska Curie grant agreement no. 101066588.

\section{Diagram groups and diagram products}

\subsection{Diagram groups}\label{section:diagrams}

In literature, planar diagram groups have various distinct, yet equivalent definitions. Informally, they are either defined as fundamental group of Squier square complex, or in terms of groups consisting of   equivalence classes of certain pictures comprising of labelled directed graphs with cells (which leads to viewing planar diagram groups as two-dimensional analogues of free groups). They can also be viewed as collection of equivalent dual pictures consisting of transistors and wires.  This last will be the  formal definition of which we will adapt throughout the paper. 
\medskip
\paragraph{\textbf{Symmetric semigroup diagrams.}} In this paragraph, we introduce the main definitions related to \emph{symmetric semigroup diagrams}, firstly introduced in \cite{MR1396957} and further studied in \cite{MR2136028}. We follow the definition given in \cite{MR3822286} and we refer to Example \ref{ex:diagram} below to get a good picture to keep in mind when reading the definition (notice, however, that on our diagrams the frames are not drawn). 

\medskip \noindent
The \textit{free semigroup} on the set $\Sigma$ is the set $\Sigma^+$ of all non-empty strings formed from $\Sigma$, that is, 
$$\Sigma^{+}= \{ u_1u_2\dots u_n ~|~ n \in \mathbb{N}, ~u_i \in \Sigma \text{ for all }i\}.$$
Let $1$ denotes the empty string and $\Sigma^*= \Sigma^+ \cup \{1\}$. We define operation of concatenation on sets $\Sigma^+$ and $\Sigma^*$. We say $$w_1 = w_2$$ if $w_1$ and $w_2$ are equal as words in $\Sigma^*$. A \emph{set of relations} $\mathcal{R}$ is a subset of $\Sigma^+ \times \Sigma^+$. For clarity, we denote an element $(u,v)$ of $\mathcal{R}$ as $u=v$.\\
A \textit{semigroup presentation} $\mathcal{P}:= \langle \Sigma \mid \mathcal{R} \rangle$ is the data of an alphabet $\Sigma$ and a set of relations $\mathcal{R}$. In all our paper, we follow the convention that, if $u=v$ is a relation of $\mathcal{R}$, then $v=u$ does not belong to $\mathcal{R}$; as a consequence, $\mathcal{R}$ does not contain relations of the form $u=u$.\\
\medskip

We now begin to define the fundamental pieces which we will glue together to construct (symmetric) diagrams.
\begin{itemize}
	\item A \emph{wire} is a homeomorphic copy of $[0,1]$. The point $0$ is the \emph{bottom} of the wire, and the point $1$ its \emph{top}.
	\item A \emph{transistor} is a homeomorphic copy of $[0,1]^2$. One says that $[0,1] \times \{1 \}$ (resp. $[0,1] \times \{ 0 \}$, $\{0\} \times [0,1]$, $\{1 \} \times [0,1]$) is the \emph{top} (resp. \emph{bottom}, \emph{left}, \emph{right}) \emph{side} of the transistor. Its top and bottom sides are naturally endowed with left-to-right orderings.
	\item A \emph{frame} is a homeomorphic copy of $\partial [0,1]^2$. It has \emph{top}, \emph{bottom}, \emph{left} and \emph{right sides}, just as a transistor does, and its top and bottom sides are naturally endowed with left-to-right orderings.
\end{itemize}
Fix a semigroup presentation $\mathcal{P}= \langle \Sigma \mid \mathcal{R} \rangle$. A \emph{symmetric diagram over $\mathcal{P}$} is a labelled oriented quotient space obtained from
\begin{itemize}
	\item a finite non-empty collection $W(\Delta)$ of wires;
	\item a labelling function $\ell : W(\Delta) \to \Sigma$;
	\item a finite (possibly empty) collection $T(\Delta)$ of transistors;
	\item and a frame,
\end{itemize}
which satisfies the following conditions:
\begin{itemize}
	\item each endpoint of each wire is attached either to a transistor or to the frame;
	\item the bottom of a wire is attached either to the top of a transistor or to the bottom of the frame;
	\item the top of a wire is attached either to the bottom of a transistor or the top of the frame;
	\item the images of two wires in the quotient must be disjoint;
	\item if $\mathrm{top}(T)$ (resp. $\mathrm{bot}(T)$) denotes the word obtained by reading from left to right the labels of the wires which are connected to the top side (resp. the bottom side) of a given transistor $T$, then either $\mathrm{top}(T)= \mathrm{bot}(T)$ or $\mathrm{bot}(T)= \mathrm{top}(T)$ belongs to $\mathcal{R}$;
	\item if, given two transistors $T_1$ and $T_2$, we write $T_1 \prec T_2$ when there exists a wire whose bottom contact is a point on the top side of $T_1$ and whose top contact is a point on the bottom side of $T_2$, and if we denote by $<$ the transitive closure of $\prec$, then $<$ is a strict partial order on the set of the transistors. 
\end{itemize}
Two symmetric diagrams are \emph{equivalent} if there exists a homeomorphism between them which preserves the labellings of wires and all the orientations (left-right and top-bottom) on all the transistors and on the frame. From now on, every symmetric diagram will be considered up to equivalence. This allows us to represent a diagram in the plane so that the frame and the transistors are straight rectangles such that their left-right and top-bottom orientations coincide with a fixed orientation of the plane, and so that wires are vertically monotone. 

\medskip \noindent
Given a symmetric diagram $\Delta$, one defines its \emph{top label} (resp. its \emph{bottom label}), denoted by $\mathrm{top}(\Delta)$ (resp. $\mathrm{bot}(\Delta)$), as the word obtained by reading from left to right the labels of the wires connected to the top (resp. the bottom) of the frame. A \emph{symmetric $(u,v)$-diagram} is a symmetric diagram whose top label is $u$ and whose bottom label is $v$. 

\medskip \noindent
Fixing two symmetric diagrams $\Delta_1$ and $\Delta_2$ satisfying $\mathrm{top}(\Delta_2)= \mathrm{bot}(\Delta_1)$, one can define their \emph{concatenation} $\Delta_1 \circ \Delta_2$ by gluing bottom endpoints of the wires of $\Delta_1$ which are connected to the bottom side of the frame to the top endpoints of the wires of $\Delta_2$ which are connected to the top side of the frame, following the left-to-right ordering, and by identifying, and next removing, the bottom side of the frame of $\Delta_1$ with the top side of the frame of $\Delta_2$. Loosely speaking, we ``glue'' $\Delta_2$ below $\Delta_1$.

\medskip \noindent
Given a symmetric diagram $\Delta$, a \emph{dipole} in $\Delta$ is the data of two transistors $T_1,T_2$ satisfying $T_1 \prec T_2$ such that, if $w_1, \ldots, w_n$ denotes the wires connected to the top side of $T_1$, listed from left to right:
\begin{itemize}
	\item the top endpoints of the $w_i$'s are connected to the bottom of $T_2$ with the same left-to-right order, and no other wires are attached to the bottom of $T_2$;
	\item the top label of $T_2$ is the same as the bottom label of $T_1$. 
\end{itemize}
Given such a dipole, one may \emph{reduce} it by removing the transistors $T_1, T_2$ and the wires $w_1, \ldots, w_n$, and connecting the top endpoints of the wires which are connected to the top side of $T_2$ with the bottom endpoints of the wires which are connected to the bottom side of $T_1$ (preserving the left-to-right orderings). 

\medskip \noindent
A symmetric diagram without dipoles is \emph{reduced}. Clearly, any symmetric diagram can be transformed into a reduced one by reducing its dipoles, and according to \cite[Lemma~2.2]{MR2136028}, the reduced diagram we get does not depend on the order we choose to reduce its dipoles. We refer to this reduced diagram as the \emph{reduction} of the initial diagram. Two diagrams are the same \emph{modulo dipoles} if they have the same reduction. 
\begin{ex}\label{ex:diagram}
Consider the semigroup presentation 
$$\mathcal{P}= \langle a,b,c \mid ab=ba, ac=ca, bc=cb \rangle.$$
Figure \ref{figure9} shows the concatenation of two symmetric diagrams over $\mathcal{P}$, and the reduction of the resulting symmetric diagram. 
\begin{figure}
\begin{center}
\includegraphics[trim={0 11cm 20cm 0},clip,scale=0.35]{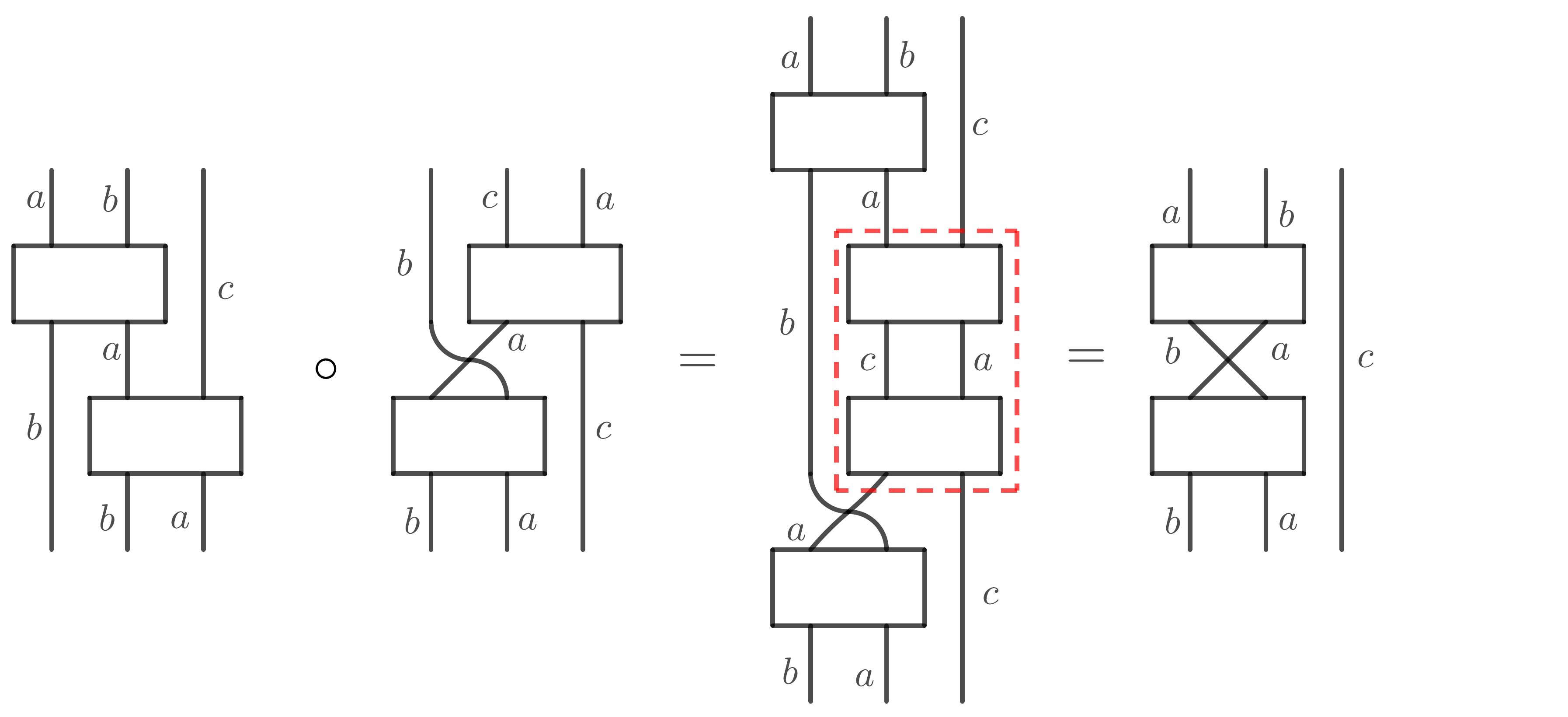}

\caption{}
\label{figure9}
\end{center}
\end{figure}
\end{ex}
\begin{definition}
For every $w \in \Sigma^+$, the \emph{symmetric diagram group} $D_s(\mathcal{P},w)$ is the set of all the symmetric $(w,w)$-symmetric semigroup diagrams over $\mathcal{P}$, modulo dipoles, endowed with the concatenation. 
\end{definition}

\noindent
This is indeed a group according to \cite{MR2136028} who refer them as \textit{picture groups}. They also appear under the name of \textit{braided diagram groups} in the work of Guba-Sapir \cite{MR1396957}.

 \begin{remark}
We recall the remark given in \cite{MR3822286} that the braided diagrams, despite the name, are not truly braided. We noticed that the two braided diagrams are equivalent if there is a certain type of marked homeomorphism between them. Thus, the equivalence does not depend upon the embedding into the larger space. Though these groups seem to have little in common with Artin braid groups. So conventionally, we prefer to call these symmetric diagram groups.
 \end{remark}
 
\begin{ex}\label{ex:V}
The symmetric diagram group $D_s(\mathcal{P},x)$ associated to the semigroup presentation $\mathcal{P} = \langle x \mid x=x^2 \rangle$ is isomorphic to Thompson's group $V$. See \cite[Example 16.6]{MR1396957} for a proof. 
\end{ex}
\medskip

\paragraph{\textbf{Planar and annular diagram groups.}} In this section, we fix a semigroup presentation $\mathcal{P}= \langle \Sigma \mid \mathcal{R} \rangle$ and a baseword $w \in \Sigma^+$. Below, we define two variations of symmetric semigroup diagrams. Our first variation is the main topic of \cite{MR1396957}.

\begin{definition}
A symmetric diagram over $\mathcal{P}$ is \emph{planar} if there exists an embedding $\Delta \to \mathbb{R}^2$ which preserves the left-to-right orderings and the top-bottom orientations on the transistors and the frame. The \emph{planar diagram group} $D_p(\mathcal{P},w)$ is the subgroup of $D_s(\mathcal{P},w)$ consisting of all planar diagrams.
\end{definition}

\begin{ex}\label{ex:F}
The planar diagram group $D_p(\mathcal{P},x)$ associated to the semigroup presentation $\mathcal{P} = \langle x \mid x=x^2 \rangle$ is isomorphic to Thompson's group $F$. See \cite[Example 6.4]{MR1396957} for a proof. 
\end{ex}

\noindent
Our second variation was also introduced in \cite{MR1396957}, and further studied in \cite{MR2136028}. Keep in mind Figure \ref{figure6} when reading the definition.
\begin{figure}
\begin{center}
\includegraphics[trim={0 0cm 29cm 0},clip,scale=0.25]{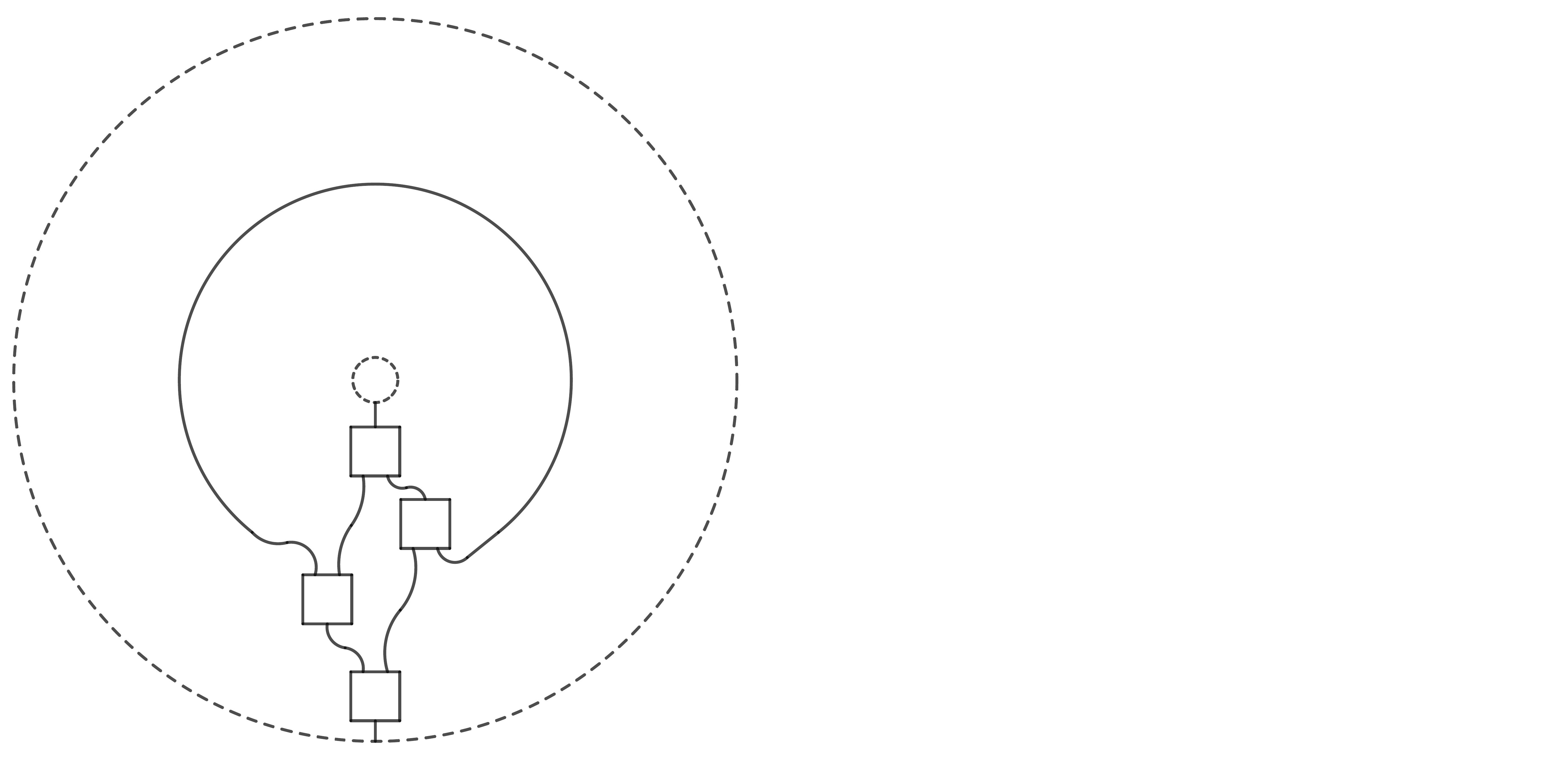}
\caption{Annular diagram.}
\label{figure6}
\end{center}
\end{figure}

\begin{definition}
A symmetric diagram $\Delta$ is \emph{annular} if it can be embedded into an annulus by preserving the left-to-right orderings and the top-bottom orientations on the transistors and the frame. More precisely, suppose that we replace the frame of $\Delta$ with a pair of disjoint circles, both endowed with the counterclockwise orientation in place of the previous left-to-right orderings of the top and bottom sides of the frame; we also fix a basepoint on each circles (which will be disjoint from the wires). Transistors and wires are defined as before, and their attaching maps are subject to the same conditions as before, where the inner (resp. outer) circle of the frame plays the role of the top (resp. bottom) side of the frame. The resulting diagram is \emph{annular} if it embeds into the plane by preserving the left-to-right orderings on the transistors. The \emph{annular diagram group} $D_a(\mathcal{P},w)$ is the set of annular $(w,w)$-diagrams over $\mathcal{P}$, modulo dipoles, endowed with the concatenation.
\end{definition}

\noindent
All the definitions introduced in the previous section naturally generalise to planar and annular semigroup diagrams. Nevertheless, we mention that, when concatenating two annular diagrams, i.e.\ when identifying the top circle of the second diagram with the bottom circle of the second one, we have to match the basepoints on the different circles. 

\begin{ex}\label{ex:T}
The annular diagram group $D_a(\mathcal{P},x)$ associated to the semigroup presentation $\mathcal{P} = \langle x \mid x=x^2 \rangle$ is isomorphic to Thompson's group $T$. See \cite[Example 16.5]{MR1396957} for a proof. 
\end{ex}

\noindent
It is worth noticing that a planar diagram is an annular diagram as well, so that, in the same way that $F \subset T \subset V$, one has
$$D(\mathcal{P},w) \subset D_a(\mathcal{P},w) \subset D_s(\mathcal{P},w)$$
for every semigroup presentation $\mathcal{P}= \langle \Sigma \mid \mathcal{R} \rangle$ and every baseword $w \in \Sigma^+$.

\subsection{Diagram products}

\noindent
In this section, we describe symmetric diagram products as introduced in \cite{MR4033512}. Example~\ref{ex:diagramsPG} and Figure~\ref{figure10} below illustrate the corresponding definitions which we give now. 

\medskip \noindent
Let $\mathcal{P}= \langle \Sigma \mid \mathcal{R} \rangle$ be a semigroup presentation and $\mathcal{G}= \{ G_s \mid s \in \Sigma \}$ a collection of groups indexed by the alphabet $\Sigma$. We set a new alphabet
$$\Sigma(\mathcal{G})= \{ (s,g) \mid s \in \Sigma, \ g \in G_s \}$$
and a new set of relations $\mathcal{R}(\mathcal{G})$ containing
$$ (u_1,g_1) \cdots (u_n,g_n) = (v_1,h_1) \cdots (v_m,h_m) $$
for all $u_1 \cdots u_n=v_1 \cdots v_m \in \mathcal{R}$ and $g_1 \in G_{u_1}, \ldots, g_n \in G_{u_n}$, $h_1 \in G_{v_1}, \ldots, h_n \in G_{u_m}$. We get a new semigroup presentation $\mathcal{P}(\mathcal{G})= \langle \Sigma(\mathcal{G}) \mid \mathcal{R} (\mathcal{G}) \rangle$. A \emph{symmetric diagram over $(\mathcal{P}, \mathcal{G})$} is a symmetric semigroup presentation over $\mathcal{P}(\mathcal{G})$. If $\Delta$ is such a diagram, we denote by $\mathrm{top}^-(\Delta)$ (resp. $\mathrm{bot}^-(\Delta)$) the image of $\mathrm{top}(\Delta)$ (resp. $\mathrm{top}(\Delta)$) under the natural projection $\Sigma(\mathcal{G})^+ \to \Sigma^+$ (which ``forgets'' the second coordinate). If $u,v \in \Sigma^+$ are words, a \emph{$(u,v)$-diagram over $(\mathcal{P}, \mathcal{G})$} is a diagram $\Delta$ satisfying $\mathrm{top}^-(\Delta)=u$ and $\mathrm{bot}^-(\Delta)=v$; we say that $\Delta$ is a \emph{$(u,\ast)$-diagram over $(\mathcal{P}, \mathcal{G})$} if we do not want to mention $v$. 

\medskip \noindent
All the vocabulary introduced in Section \ref{section:diagrams} applies to symmetric diagrams over $(\mathcal{P}, \mathcal{G})$ thought of as symmetric semigroup diagrams over $\mathcal{P}(\mathcal{G})$, except the concatenation and the dipoles which we define now. 

\medskip \noindent
If $\Delta_1$ and $\Delta_2$ are two symmetric diagrams over $(\mathcal{P}, \mathcal{G})$ satisfying $\mathrm{top}^-(\Delta_2)= \mathrm{bot}^-(\Delta_1)$, we define the \emph{concatenation} of $\Delta_1$ and $\Delta_2$ as the symmetric diagram over $(\mathcal{P}, \mathcal{G})$ obtained in the following way. Write $\mathrm{bot}(\Delta_1) = (w_1,g_1) \cdots  (w_n,g_n)$ and $\mathrm{top}(\Delta_2)= (w_1,h_1) \cdots (w_n,h_n)$ for some $w_1, \ldots, w_n \in \Sigma$ and $g_1, \ldots, g_n,h_1, \ldots, h_n \in \bigsqcup\limits_{G \in \mathcal{G}} G$. Now, 
\begin{itemize}
	\item we glue the top endpoints of the wires of $\Delta_2$ connected to the top side of the frame to the bottom endpoints of the wires of $\Delta_1$ connected to the bottom side of the frame (respecting the left-to-right ordering);
	\item we label these wires from left to right by $(w_1,g_1h_1), \ldots, (w_n,g_nh_n)$;
	\item we identify, and then remove, the bottom side of the frame of $\Delta_1$ and the top side of the frame of $\Delta_2$.
\end{itemize}
The symmetric diagram we get is the \emph{concatenation} $\Delta_1 \circ \Delta_2$.

\medskip \noindent
Given a symmetric diagram $\Delta$ over $(\mathcal{P}, \mathcal{G})$, a \emph{dipole} in $\Delta$ is the data of two transistors $T_1,T_2$ satisfying $T_1 \prec T_2$ such that, if $w_1, \ldots, w_n$ denote the wires connected to the top side of $T_1$, listed from left to right:
\begin{itemize}
	\item the top endpoints of the $w_i$'s are connected to the bottom of $T_2$ with the same left-to-right order, and no other wires are attached to the bottom of $T_2$;
	\item the labels $\mathrm{top}^-(T_2)$ and $\mathrm{bot}^-(T_1)$ are the same;
	\item the wires $w_1, \ldots, w_n$ are labeled by letters of $\Sigma(\mathcal{G})$ with trivial second coordinates.
\end{itemize}
Given such a dipole, one may \emph{reduce} it by 
\begin{itemize}
	\item removing the transistors $T_1,T_2$ and the wires $w_1, \ldots, w_n$;
	\item connecting the top endpoints of the wires $a_1, \ldots, a_m$ (from left to right) which are connected to the top side of $T_2$ with the bottom endpoints of the wires $b_1, \ldots, b_m$ (from left to right) which are connected to the bottom side of $T_1$ (preserving the left-to-right orderings);
	\item and labelling the new wires by $( \ell_1, g_1h_1), \ldots, (\ell_m, g_mh_m)$ from left to right, if $a_i$ is labelled by $(\ell_i,h_i)$ and $b_i$ by $(\ell_i,g_i)$ for every $1 \leq i \leq m$.
\end{itemize}
A symmetric diagram over $(\mathcal{P}, \mathcal{G})$ which does not contain any dipole is \emph{reduced}. Of course, any symmetric diagram can be transformed into a reduced one by reducing its dipoles, and the same argument as \cite[Lemma 2.2]{MR2136028} shows that the reduced diagram we get does not depend on the order we choose to reduce its dipoles. We refer to this reduced diagram as the \emph{reduction} of the initial diagram. Two diagrams are the same \emph{modulo dipoles} if they have the same reduction. 

\begin{definition}
For every $w \in \Sigma^+$, the \emph{symmetric diagram product} $D_s(\mathcal{P}, \mathcal{G},w)$ is the set of all the symmetric $(w,w)$-diagrams $\Delta$ over $(\mathcal{P},\mathcal{G})$, modulo dipoles, endowed with the concatenation. 
\end{definition}

\noindent
This is indeed a group, for the same reason that a symmetric diagram group turns out to be a group.

\begin{ex}\label{ex:diagramsPG}
Consider the semigroup presentation $\mathcal{P}= \langle a,b,p \mid a=ap, b=pb \rangle$ and the collection of groups $\mathcal{G}= \{ G_a=G_b=G_p= \mathbb{Z} \}$. Figure \ref{figure10} shows the concatenation of two symmetric diagrams over $(\mathcal{P}, \mathcal{G})$, and the reduction of the resulting diagram. 
\begin{figure}
\begin{center}
\includegraphics[trim={0 4cm 15cm 0},clip,scale=0.3]{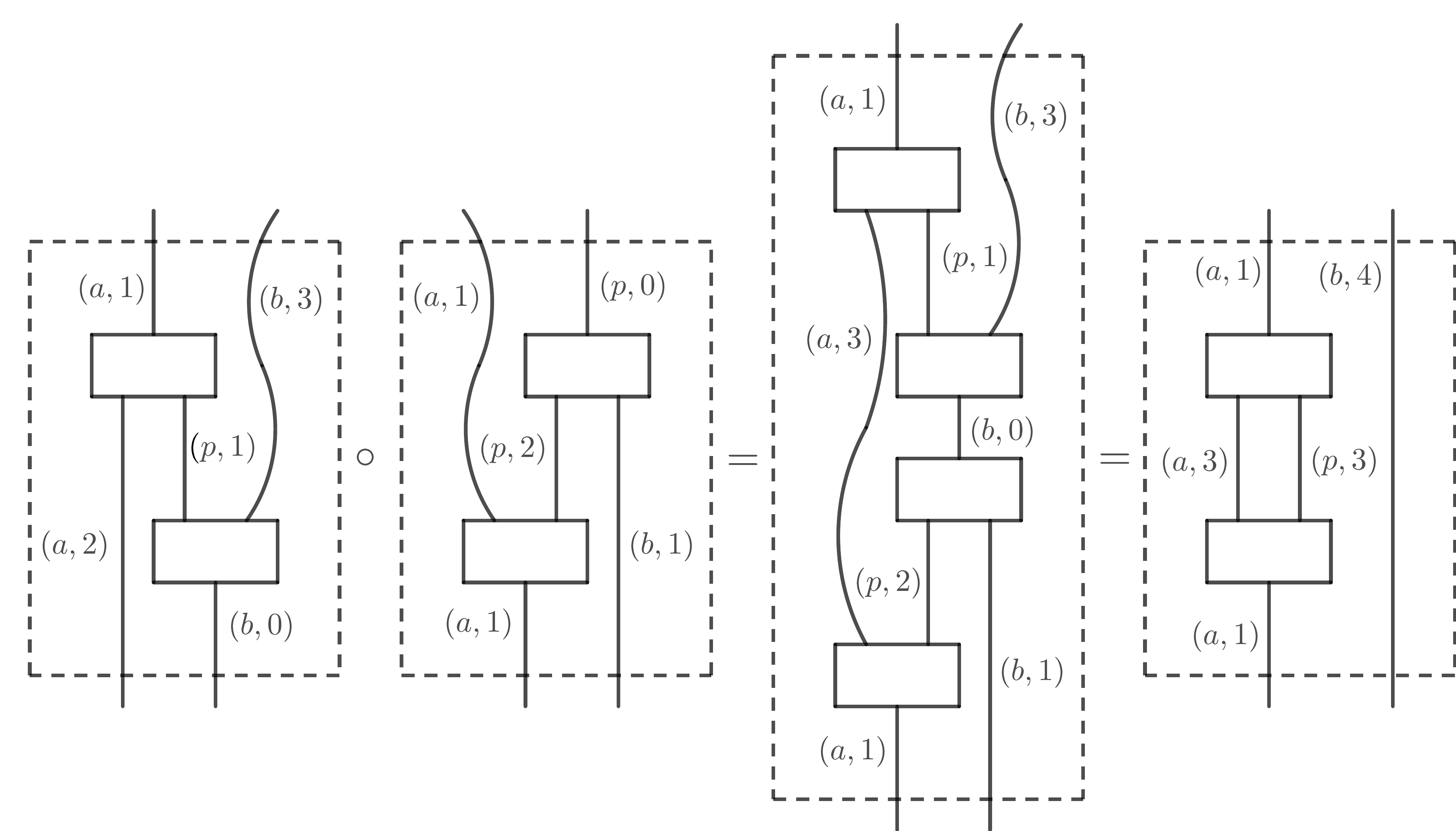}
\caption{Concatenation and reduction of two symmetric diagrams.}
\label{figure10}
\end{center}
\end{figure}
\end{ex}

\subsection{Combination theorem}

\noindent
It is proved in \cite[Theorem~4]{MR1725439} that a planar diagram product of planar diagram groups is again a diagram group. The motivation is clear: given a diagram reprensenting an element of our diagram product, one can produce a diagram over a bigger semigroup presentation by expanding each wire labelled by an element with a diagram representing it. This idea allows one to construct an injective morphism from every (planar or symmetric) diagram product of (planar or symmetric) diagram groups into a (planar or symmetric) diagram group. The difficulty is to choose the latter group in order get a morphism which is also surjective. Let us illustrate what may happen on a specific example.

\medskip \noindent
Let $\mathcal{P} = \langle a \mid \ \rangle$. Given a group $G$, the symmetric diagram product $D(\mathcal{P},\mathcal{G},a^2)$, where $\mathcal{G}= \{G(a):=G\}$, is isomorphic to the wreath product $G \wr \mathbb{Z}/2\mathbb{Z}:= (G \oplus G) \rtimes \mathbb{Z}/2\mathbb{Z}$. Now, assume that $G$ is isomorphic to Thompson's group $V$. We know that $V$ can be described as the symmetric diagram group $D(\mathcal{Q},x)$ where $\mathcal{Q}:= \langle x \mid x=x^2 \rangle$. A natural attempt is to set the semigroup presentation
$$\mathcal{S}:= \langle a,x \mid a=x, x=x^2 \rangle$$
and to ask whether $D(\mathcal{P},\mathcal{G},a^2)$ is isomorphic to $D(\mathcal{S},a^2)$. However, every reduced $(a^2,a^2)$-diagram over $\mathcal{S}$ can be written (up to dipole reduction) as a concatenation $A \circ \Delta \circ A^{-1}$ where $\Delta$ is an $(x^2,x^2)$-diagram over $\mathcal{Q}$ and where $A$ is the $(a^2,x^2)$-diagram given by the derivation $aa \to xa \to xx$. Thus, $D(\mathcal{S},a^2)$ is isomorphic to $D(\mathcal{Q},x^2)$. Since $x=x^2$ modulo $\mathcal{Q}$, the latter is also isomorphic to $D(\mathcal{Q},x)$, and we conclude that $D(\mathcal{S},a^2)$ is actually isomorphic to $V$. 

\medskip \noindent
In our example, the obstruction comes from the fact that the two natural copies of $D(\mathcal{Q},x)$ in $D(\mathcal{S},a^2)$ ``interact'', which is not the case for the two copies of $V$ in the diagram product. In planar diagram products, it is possible to insert ``separation letters'' in order to prevent such an interaction. However, in symmetric diagram products, the trick does not work anymore since wires are allowed to cross.

\medskip \noindent
Actually, it is reasonable to think that, contrary to the planar case, symmetric diagram groups are not stable under symmetric diagram products. As explicit condidates:

\begin{question}
Are the free product $V \ast V$ and the wreath product $V \wr \mathbb{Z}/2\mathbb{Z}$ symmetric diagram groups?
\end{question}

\noindent
Nevertheless, it turns out that symmetric diagram products of symmetric diagram groups are symmetric diagram groups in some cases. Below, we record a simple case which will help us proving Theorem~\ref{thm:MainIntro}. 

\begin{thm}\label{thm:Combination}
Let $\mathcal{P}:= \langle \Sigma \mid \mathcal{R} \rangle$ be a semigroup presentation and $w \in \Sigma^+$ a word. Set $\mathcal{G}:= \{ G(s)= \mathbb{Z}, s \in \Sigma\}$. The diagram product $D(\mathcal{P},\mathcal{G},w)$ is isomorphic to the diagram group $D(\mathcal{Q},w)$ where
$$\mathcal{Q}:=\left\langle \Sigma \sqcup \bigsqcup_{s \in \Sigma} \{a_s,b_s,c_s\} \mid \mathcal{R} \sqcup \bigsqcup\limits_{s \in \Sigma} \{s=a_s,a_s=b_s,b_s=c_s, c_s=s\} \right\rangle.$$
\end{thm}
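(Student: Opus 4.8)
The plan is to construct explicit mutually inverse homomorphisms between $D(\mathcal{P},\mathcal{G},w)$ and $D(\mathcal{Q},w)$. The key observation is that, in $\mathcal{Q}$, each letter $s \in \Sigma$ has a ``private'' $4$-cycle of relations $s=a_s,\ a_s=b_s,\ b_s=c_s,\ c_s=s$ attached to it, and these relations involve no other letter of $\mathcal{Q}$. The symmetric $(s,s)$-diagrams over this $4$-cycle, modulo dipoles, form a copy of $\mathbb{Z}$: indeed the diagrams built only from the cells $s\to a_s\to b_s\to c_s\to s$ (and their inverses), all having a single wire on each side, are linearly ordered by a ``winding number'' counting how many times one goes around the cycle, and concatenation adds these numbers. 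In other words, $D(\mathcal{Q}_s, s) \cong \mathbb{Z}$ where $\mathcal{Q}_s := \langle s,a_s,b_s,c_s \mid s=a_s,a_s=b_s,b_s=c_s,c_s=s\rangle$; this is a small, self-contained computation in the spirit of Examples~\ref{ex:V}--\ref{ex:T}, and it is exactly what plays the role of $G(s)=\mathbb{Z}$.

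First I would define a map $\Phi : D(\mathcal{P},\mathcal{G},w) \to D(\mathcal{Q},w)$ as follows. Given a reduced symmetric diagram $\Delta$ over $(\mathcal{P},\mathcal{G})$, each wire is labelled by a pair $(s,k)$ with $s \in \Sigma$ and $k \in \mathbb{Z} = G(s)$; replace such a wire by the $\mathcal{Q}$-diagram on a single wire which is the identity if $k=0$ and otherwise a vertical stack of $|k|$ copies of the $4$-cycle winding (in the positive or negative direction according to the sign of $k$), so that its top and bottom labels are both $s$. Transistors of $\Delta$ correspond to relations of $\mathcal{R}(\mathcal{G})$; each such relation is, after forgetting second coordinates, a relation $u=v$ of $\mathcal{R}$, and the group elements decorating the wires can be absorbed into the expansions just described. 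One checks $\Phi$ is well-defined on equivalence classes, respects concatenation (the group multiplication in each $\mathbb{Z}=G(s)$ matches the addition of winding numbers), and sends reduced diagrams to diagrams whose reduction is computed by cancelling the $4$-cycle sub-diagrams, hence is injective because the winding numbers and the underlying $\mathcal{P}$-diagram are recoverable.

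Next I would construct the inverse $\Psi : D(\mathcal{Q},w) \to D(\mathcal{P},\mathcal{G},w)$. Here the main structural input, analogous to the worked example with $\mathcal{S}=\langle a,x\mid a=x, x=x^2\rangle$ in the paragraph preceding the theorem, is a normal form for reduced $(w,w)$-diagrams over $\mathcal{Q}$: since $w \in \Sigma^+$ and the only relations linking the $a_s,b_s,c_s$ to the rest are the $4$-cycle at $s$, any reduced $(w,w)$-diagram over $\mathcal{Q}$ can be written, modulo dipoles, so that all occurrences of the letters $a_s,b_s,c_s$ appear inside ``bubbles'' on single wires, i.e. inside sub-diagrams over some $\mathcal{Q}_s$ sitting on a wire labelled $s$, while the remaining transistors are $\mathcal{R}$-cells in $\Sigma$. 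Collapsing each such bubble to its winding number in $\mathbb{Z}=G(s)$ and reading off the resulting $\Sigma(\mathcal{G})$-labelled diagram over $\mathcal{P}(\mathcal{G})$ gives $\Psi$. Finally I would check $\Psi\circ\Phi = \mathrm{id}$ and $\Phi\circ\Psi=\mathrm{id}$ directly from these descriptions, and that both maps are group homomorphisms.

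The main obstacle is establishing the normal form / bubble-decomposition for diagrams over $\mathcal{Q}$ rigorously: one must argue that in a \emph{reduced} symmetric diagram over $\mathcal{Q}$ the auxiliary letters $a_s,b_s,c_s$ cannot ``escape'' a single wire — a wire labelled $a_s$ (say) is attached to a transistor coming from a relation of $\mathcal{Q}$ containing $a_s$, and the only such relations are $s=a_s$ and $a_s=b_s$, each with exactly one wire on each side — so by induction on the transistors the $\mathcal{Q}_s$-parts are genuinely confined, and after dipole reduction they reduce to elements of $\mathbb{Z}$. Care is needed because symmetric diagrams allow wires to cross, so "confined to a wire" must be phrased combinatorially (in terms of the partial order $<$ on transistors and the labels), not topologically. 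Once this is in place, everything else is bookkeeping matching the concatenation and dipole-reduction rules for diagram products (which update the $G(s)$-coordinates by multiplication) with stacking and cancellation of $4$-cycle bubbles.
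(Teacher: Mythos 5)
Your proposal is correct and follows essentially the same route as the paper: expand each wire labelled $(s,k)$ into the $(s,s)$-diagram winding $k$ times around the $4$-cycle $s\to a_s\to b_s\to c_s\to s$, check that this respects dipole reduction and concatenation, get injectivity from the fact that reduced diagrams map to reduced diagrams, and get surjectivity by showing that in a reduced diagram over $\mathcal{Q}$ the transistors labelled by the auxiliary relations are forced (since each such relation has a single letter on each side) to form confined chains that collapse back to winding numbers. The ``main obstacle'' you isolate is exactly the content of the paper's surjectivity argument, and your reason for why it holds is the right one.
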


\begin{proof}
To every diagram $\Delta$ over $(\mathcal{P},\mathcal{G})$, we associate a diagram $\Theta(\Delta)$ over $\mathcal{Q}$ as follows. Given a wire in $\Delta$ labelled by some $(s,k)$ with $s \in \Sigma$ and $k \in G(s)$, we replace it with the $(s,s)$-diagram $M_s(k)$ over $\mathcal{Q}$ given by the derivation
$$s \to \underset{k \text{ times}}{\underbrace{a_s \to b_s \to c_s \to \cdots \to a_s \to b_s \to c_s}} \to s $$
if $k \geq 0$ or its inverse if $k<0$. 

\medskip \noindent
The first observation is that $\Theta$ sends diagrams which are equivalent modulo dipole reduction to diagrams which are equivalent modulo dipole reduction. Indeed, let $\Delta'$ be a diagram obtained from another diagram $\Delta$ by reducing a dipole, say $(T_1,T_2)$. The wires connecting $T_1$ and $T_2$ do not cross and have trivial second coordinates. Therefore, $(T_1,T_2)$ also defines a dipole in $\Theta(\Delta)$. Clearly, reducing $(T_1,T_2)$ in $\Theta(\Delta)$ yields $\Theta(\Delta')$. This justifies our assertion.

\medskip \noindent
Thus, $\Theta$ defines a map $D(\mathcal{P},\mathcal{G},w) \to D(\mathcal{Q},w)$. Let us verify that it is a morphism. Let $\Phi,\Psi \in D(\mathcal{P},\mathcal{G},w)$ be two diagrams and let $w_1, \ldots,w_n$ denote the letters of $w$ from left to right. We can decompose $\Phi$ (resp. $\Psi$) as a concatenation $\Phi_0 \circ \epsilon$ (resp. $\eta \circ \Phi_0$) where $\Phi_0$ (resp. $\Psi_0$) has its bottom (resp. top) wires whose labels have trivial second coordinates and where $\epsilon$ (resp. $\eta$) has no transistor but only wires labelled from left to right by, say, $g_1 \in G(w_1), \ldots, g_n \in G(w_n)$ (resp. $h_1 \in G(w_1), \ldots, h_n \in G(w_n)$). Then $\Phi \circ \Psi$ can be written as $\Phi_0 \circ \mu \circ \Psi_0$ where $\mu$ has no transistor by only wires labelled from left to right by $g_1h_1, \ldots, g_nh_n$. By construction, $\Theta( \Phi \circ \Psi)= \Theta( \Phi_0) \circ \Theta(\mu) \circ \Theta(\Psi_0)$. But we clearly have $\Theta(\mu) \equiv \Theta(\epsilon) \circ \Theta(\eta)$, $\Theta(\Phi_0) \circ \Theta(\epsilon)$, and $\Theta(\eta) \circ \Theta(\Psi_0)= \Theta(\Psi)$. Hence $\Theta(\Phi \circ \Psi) \equiv \Theta(\Phi) \circ \Theta(\Psi)$. 

\medskip \noindent
Now, let us verify that $\Theta$ is injective. Let $\Delta \in D(\mathcal{P},\mathcal{G},w)$ be a diagram. Assume that $\Theta(\Delta)$ contains a dipole, say $(T_1,T_2)$. Because $\Theta(\Delta)$ is obtained from $\Delta$ by expanding the wires with reduced diagrams, necessarily $T_1$ and $T_2$ already existed in $\Delta$, and necessarily define a dipole in $\Delta$. Therefore, $\Theta$ sends reduced diagrams to reduced diagrams, which implies its injectivity.

\medskip \noindent
Finally, it remains to show that $\Theta$ is surjective. So let $\Delta \in D(\mathcal{Q},w)$ be an arbitrary reduced diagram. Because $\Delta$ is reduced, for every $s \in \Sigma$:
\begin{itemize}
	\item a transistor labelled by $a_s=b_s$ must lie between two transistors labelled by $s=a_s$ and $b_s=c_s$ or $c_s=a_s$ and $b_s=a_s$; 
	\item a transistor labelled by $b_s=c_s$ must lie between two transistors labelled by $a_s=b_s$ and $c_s=s$ or $c_s=a_s$;
	\item a transistor labelled by $c_s=a_s$ must lie between two transistors labelled by $b_s=c_s$ and $a_s=s$ or $a_s=b_s$.
\end{itemize}
As a consequence, there exist subdiagrams $\Phi_1, \ldots, \Phi_n$ such that every transistor labelled by a relation not in $\mathcal{R}$ is contained in some $\Phi_i$ and such that, for each $1 \leq i \leq n$, $\Phi_i = M_s(k)$ for some $s \in \Sigma$ and $k \in \mathbb{Z}$. Let $\Psi$ denote the diagram obtained from $\Delta$ by collapsing each such $\Phi_i$ to a single wire labelled by $(s,k)$. Then $\Psi$ is a diagram over $(\mathcal{P},w)$ and $\Theta(\Psi)=\Delta$ by construction. 
\end{proof}

\section{Graph products}

\subsection{Preliminaries}

\noindent
Let $\Gamma$ be a simplicial graph and $\mathcal{G}= \{ G_u \mid u \in V(\Gamma) \}$ be a collection of groups indexed by the vertex-set $V(\Gamma)$ of $\Gamma$. The \emph{graph product} $\Gamma \mathcal{G}$ is defined as the quotient
$$\left( \underset{u \in V(\Gamma)}{\ast} G_u \right) / \langle \langle [g,h]=1, g \in G_u, h \in G_v \ \text{if} \ (u,v) \in E(\Gamma) \rangle \rangle$$
where $E(\Gamma)$ denotes the edge-set of $\Gamma$. The groups of $\mathcal{G}$ are referred to as \emph{vertex-groups}. 

\medskip \noindent
A \emph{word} in $\Gamma \mathcal{G}$ is a product $g_1 \cdots g_n$ for some $n \geq 0$ and, for every $1 \leq i \leq n$, $g_i \in G$ for some $G \in \mathcal{G}$; the $g_i$'s are the \emph{syllables} of the word, and $n$ is the \emph{length} of the word. Clearly, the following operations on a word does not modify the element of $\Gamma \mathcal{G}$ it represents:
\begin{description}
	\item[Cancellation] delete the syllable $g_i=1$;
	\item[Amalgamation] if $g_i,g_{i+1} \in G$ for some $G \in \mathcal{G}$, replace the two syllables $g_i$ and $g_{i+1}$ by the single syllable $g_ig_{i+1} \in G$;
	\item[Shuffling] if $g_i$ and $g_{i+1}$ belong to two adjacent vertex-groups, switch them.
\end{description}
A word is \emph{graphically reduced} if its length cannot be shortened by applying these elementary moves. Every element of $\Gamma \mathcal{G}$ can be represented by a graphically reduced word, and this word is unique up to the shuffling operation. For more information about graphically reduced words, we refer to \cite{GreenGP} (see also \cite{HsuWise,VanKampenGP}).

\subsection{Main theorem}The rest of the section is dedicated to the proof of Theorem~\ref{thm:GraphProduct}. The strategy is to realise graph products of groups as symmetric diagram groups and next to apply Theorem~\ref{thm:Combination} in order to conclude.

\begin{definition}
Let $n \in \mathbb{N} \cup \{\infty\}$ be an integer and $\mathscr{C}$ a collection of subsets in $[n]:= \{1, \ldots, n\}$. The \emph{disjointness graph} $\Delta_\mathscr{C}$ of $\mathscr{C}$ is the graph whose vertex-set is $\mathscr{C}$ and whose edges connect two elements of $\mathscr{C}$ whenever they are disjoint. 
\end{definition}

\noindent
As a preliminary observation, we show that every countable graph can be realised as a disjointness graph.

\begin{lemma}\label{lem:DisjointGraph}
For every countable graph $\Gamma$, there exist some $n \in \mathbb{N} \cup \{\infty\}$ and some collection of finite subsets $\mathscr{C}$ of $[n]$ such that $\Delta_\mathscr{C}$ is isomorphic to $\Gamma$.
\end{lemma}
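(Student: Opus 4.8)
The plan is to encode the vertices of $\Gamma$ by finite subsets of $[n]$ in such a way that two subsets are disjoint exactly when the corresponding vertices are adjacent. First I would set $n = \infty$ when $\Gamma$ is infinite (and $n$ finite otherwise), and I would reserve a pool of ``private'' coordinates together with a pool of ``edge'' coordinates, keeping the two pools disjoint. Concretely, since $\Gamma$ is countable its edge-set $E(\Gamma)$ is countable, so fix an injection $E(\Gamma) \hookrightarrow \mathbb{N}$, writing $m_e$ for the index of an edge $e$; also fix an injection $V(\Gamma) \hookrightarrow \mathbb{N}$, writing $k_v$ for the index of a vertex $v$. Use two disjoint copies of $\mathbb{N}$ inside $[n]$, say $\{ p_i \}_{i \in \mathbb{N}}$ for vertices and $\{ q_j \}_{j \in \mathbb{N}}$ for edges.

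The key definition is then: to each vertex $v \in V(\Gamma)$ associate the set
$$ S_v := \{ p_{k_v} \} \cup \{ q_{m_e} \mid e \in E(\Gamma), \ v \notin e \}. $$
In words, $S_v$ contains one coordinate unique to $v$, plus one coordinate for every edge of $\Gamma$ that does \emph{not} have $v$ as an endpoint. Each $S_v$ is a subset of $[n]$; I would remark that, as written, $S_v$ may be infinite when $\Gamma$ is infinite, so to meet the ``finite subsets'' requirement one should instead work with the \emph{complement-type} encoding, e.g.\ take $S_v := \{p_{k_v}\} \cup \{ q_{m_e} \mid v \in e\}$ and declare adjacency via a different relation — but that breaks the disjointness formulation. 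The clean fix that keeps finiteness is: only edges incident to $v$ matter for distinguishing $v$ from its neighbours, so set
$$ S_v := \{ p_{k_v} \} \cup \{ q_{m_e} \mid e \ni v \}, $$
which is finite (it has size $1 + \deg(v)$, and $\deg(v)$ is finite for each $v$ in a simplicial graph only if $\Gamma$ is locally finite — so in general I would instead encode non-edges). I will present the version that genuinely works in all cases below.

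So let me commit to the correct construction. Since non-adjacency is what we want to detect by \emph{overlap}, fix an injection of the set of non-edges $\overline{E} := \binom{V(\Gamma)}{2} \setminus E(\Gamma)$ into $\mathbb{N}$, writing $r_f$ for the index of a non-edge $f = \{u,w\}$, and fix an injection $V(\Gamma) \hookrightarrow \mathbb{N}$ with index $k_v$. Define
$$ S_v := \{ p_{k_v} \} \cup \{ q_{r_f} \mid f \in \overline{E}, \ v \in f \}. $$
Each $S_v$ is finite, of size $1 + (\text{number of non-neighbours of } v)$; this is finite precisely when $\Gamma$ has bounded co-degree, which again need not hold. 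At this point the honest statement is that a \emph{finite} encoding of an arbitrary countable graph into a disjointness graph forces one to allow $n = \infty$ and to accept that ``finite'' must be read with care; I would therefore reduce to the case where $\Gamma$ has no isolated behaviour by the following trick: take $\mathscr{C}$ to consist of the sets $S_v := \{ p_{k_v}, q_{k_v} \} \cup \{ q_{k_w} \mid \{v,w\} \in E(\Gamma) \}$ and verify directly that $S_v \cap S_w = \emptyset \iff \{v,w\} \in E(\Gamma)$ is \emph{false}, so one instead checks $S_v \cap S_w \neq \emptyset$ always — meaning the disjointness graph is empty, which is wrong. The resolution I would actually write: interchange the roles so that $S_v$ records \emph{non}-adjacency, accept $q$-indices ranging over unordered pairs, note each $S_v$ is countable, and strengthen the lemma's ``finite'' to ``finite when $\Gamma$ is locally co-finite'' — or, as the paper surely intends, prove it with $\mathscr{C}$ a collection of finite sets by encoding each vertex $v$ via $\{p_{k_v}\} \cup \{q_{k_e} : e \ni v\}$ and \emph{redefining} the disjointness graph's edges to join sets meeting in exactly the $p$-free part; since we must use the given definition, the correct move is to dualise once more. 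I will therefore structure the written proof as: (1) reduce to $\Gamma$ with vertex set $\mathbb{N}$; (2) set $n=\infty$, let coordinates be indexed by $\mathbb{N} \sqcup \binom{\mathbb{N}}{2}$; (3) for $v \in \mathbb{N}$ put $S_v = \{(v)\} \cup \{ \{v,w\} : \{v,w\} \notin E(\Gamma), w \ne v \}$; (4) observe $S_v \cap S_w = \emptyset$ iff $v \ne w$ and $\{v,w\} \in E(\Gamma)$, giving $\Delta_{\mathscr{C}} \cong \Gamma$; (5) handle finiteness by noting that in the application $\Gamma$ is finite, so each $S_v$ is finite, while the general countable case uses that one may first restrict to finite induced subgraphs. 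The main obstacle is exactly this finiteness bookkeeping: the overlap-encodes-non-adjacency design makes $S_v$ as large as the co-degree of $v$, so for genuinely infinite graphs one must either weaken ``finite'' or exploit that the intended downstream use (Theorem~\ref{thm:MainIntro}, about finitely generated right-angled Artin groups) only needs finite $\Gamma$, where every $S_v$ is automatically finite and the verification of step (4) is a one-line set-theoretic check.
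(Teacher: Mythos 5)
Your committed construction (steps (1)--(5)) is, at its core, the same as the paper's: the paper takes the edges of the opposite graph $\Gamma^{\mathrm{opp}}$ as the ground set and sends a vertex $u$ to the set $S(u)$ of $\Gamma^{\mathrm{opp}}$-edges containing $u$, so that $S(u)\cap S(v)\neq\emptyset$ exactly when $\{u,v\}$ is a non-edge of $\Gamma$; your sets $S_v=\{(v)\}\cup\{\{v,w\}:\{v,w\}\notin E(\Gamma)\}$ are the same encoding with an extra private coordinate. That private coordinate is in fact an improvement rather than a complication: without it the map $u\mapsto S(u)$ need not be injective (for the path $u-w-v$ one gets $S(u)=S(v)=\{\{u,v\}\}$ and $S(w)=\emptyset$), which is why the paper must special-case the graph with two isolated vertices, and your fix handles all such collisions uniformly. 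The finiteness worry you keep circling is also genuine and not mere bookkeeping: the lemma as stated fails for some countable graphs, because if a vertex $v_0$ has infinitely many pairwise-adjacent non-neighbours $v_1,v_2,\ldots$, then the sets $S_{v_1},S_{v_2},\ldots$ must be pairwise disjoint while $S_{v_0}$ meets each of them, forcing $S_{v_0}$ to be infinite; so no encoding by finite sets can exist, and the correct resolution is exactly the one you give at the end (the application, Theorem~\ref{thm:MainIntro}, only needs finite $\Gamma$, where every $S_v$ is automatically finite), not the throwaway suggestion of ``restricting to finite induced subgraphs,'' which does not reassemble into the infinite case. The only real defect of your submission is expository: the long sequence of abandoned and mutually contradictory attempts preceding step (1) should be deleted entirely, leaving the committed construction and the one-line verification of step (4).
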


\begin{proof}
If $\Gamma$ is the union of two isolated vertices, it suffices to take $n=2$ and $\mathscr{C}=\{ \{1\}, \{1,2\}\}$. From now on, we assume that $\Gamma$ is not the union of two isolated vertices.

\medskip \noindent
Let $\Gamma^{\mathrm{opp}}$ denote the opposite graph of $\Gamma$, namely the graph whose vertex-set is $V(\Gamma)$ and whose edges connect two vertices whenever they are not adjacent in $\Gamma$. Let $n \in \mathbb{N} \cup \{\infty\}$ denote the number of edges in $\Gamma^{\mathrm{opp}}$. From now on, we identify $[n]$ with the edge-set of $\Gamma^{\mathrm{opp}}$. For every vertex $u$ of $\Gamma$, let $S(u) \subset [n]$ denote the set of the edges of $\Gamma^\mathrm{opp}$ containing $u$. 

\medskip \noindent
We claim that the map $\Psi : u \mapsto S(u)$ induces a graph isomorphism $\Gamma \to \Delta_\mathscr{C}$ where $\mathscr{C}:= \{ S(u), u \in V(\Gamma)\}$. Because $\Gamma$ is the union of two isolated vertices, $\Psi$ induces a bijection between the vertices of $\Gamma$ and $\Delta_\mathscr{C}$. If two vertices $u,v \in \Gamma$ are adjacent, then there is no edge in $\Gamma^\mathrm{opp}$ connecting $u$ and $v$, which amounts to saying that $S(u)$ and $S(v)$ are disjoint. Thus, $S(u)$ and $S(v)$ are adjacent in $\Delta_\mathscr{C}$. Conversely, if $u,v \in \Gamma$ are not adjacent, then $S(u)$ and $S(v)$ contain the edge of $\Gamma^\mathrm{opp}$ connecting $u$ and $v$, so $S(u)$ and $S(v)$ cannot be adjacent in $\Delta_\mathscr{C}$. 
\end{proof}

\noindent

\begin{thm}\label{thm:GraphProduct}
Let $n \in \mathbb{N}$ be an integer, $\mathscr{C}$ a collection of subsets in $[n]$, and $\mathcal{G}:= \{G_I, I \in \mathscr{C}\}$ a collection of groups indexed by $\mathscr{C}$. Let $\mathcal{P}_\mathscr{C}$ denote the semigroup presentation
$$\left\langle x_i \ (1 \leq i \leq n), \ a_I \ (I \in \mathscr{C}) \mid x_{i_1} \cdots x_{i_k} = a_I \ (I=\{i_1< \cdots < i_k\} \in \mathscr{C}) \right\rangle.$$
Setting $\mathcal{H}:= \{ H(x_i)= \{1\} \ (1 \leq i \leq n), \ H(a_I)=G_I \ (I \in \mathscr{C})\}$, the symmetric diagram product $D_s(\mathcal{P}_\mathscr{C},\mathcal{H},x_1 \cdots x_n)$ is isomorphic to the graph product $\Delta_\mathscr{C} \mathcal{G}$. 
\end{thm}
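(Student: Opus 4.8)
The plan is to construct an explicit isomorphism between the symmetric diagram product $D_s(\mathcal{P}_\mathscr{C}, \mathcal{H}, x_1 \cdots x_n)$ and the graph product $\Delta_\mathscr{C} \mathcal{G}$ by reading off a graphically reduced word from a reduced diagram. First I would analyze the structure of an arbitrary reduced $(x_1 \cdots x_n, x_1 \cdots x_n)$-diagram $\Delta$ over $(\mathcal{P}_\mathscr{C}, \mathcal{H})$. Every transistor is labelled by one of the relations $x_{i_1} \cdots x_{i_k} = a_I$ or its reverse; since the baseword is $x_1 \cdots x_n$ (each letter appearing once) and the only relations merge a block of distinct $x_i$'s into a single $a_I$, the key combinatorial observation is that in a reduced diagram every $a_I$-wire is ``born'' by a transistor of type $x_{i_1}\cdots x_{i_k} \to a_I$, carries some group element $g \in G_I$ (recorded on the wire as its second coordinate), and then ``dies'' by a transistor of type $a_I \to x_{i_1}\cdots x_{i_k}$ — it cannot interact with any other relation. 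Moreover no two such creation/annihilation pairs can be nested in a way producing a dipole (that is exactly what reducedness forbids when the carried element is trivial), so $\Delta$ decomposes, up to the shuffling freedom coming from commuting wires, as a ``stacked'' sequence of elementary pieces, each piece being: expand a disjoint block $\{x_{i_1}, \dots, x_{i_k}\}$ into an $a_I$-wire labelled $g \in G_I$, then contract it back.

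The second step is to define the map $\Delta \mapsto g_1 \cdots g_m \in \Delta_\mathscr{C}\mathcal{G}$ by reading these elementary pieces from top to bottom, recording the syllable $g_j \in G_{I_j}$ for the $j$-th piece. I would check this is well-defined modulo dipoles: reducing a dipole either deletes a piece carrying the trivial element (a Cancellation move in $\Delta_\mathscr{C}\mathcal{G}$) or merges two consecutive pieces on the same block $I$ (an Amalgamation move, using that the second coordinates multiply in $G_I$ under concatenation). Two pieces on disjoint blocks $I, I'$ — i.e.\ $I$ and $I'$ adjacent in $\Delta_\mathscr{C}$ — have wires that do not overlap and hence can be slid past each other in the diagram, which is precisely the Shuffling move; conversely if $I \cap I' \neq \emptyset$ the two pieces genuinely obstruct each other and their order in the word is forced. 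So the map descends to a homomorphism $D_s(\mathcal{P}_\mathscr{C}, \mathcal{H}, x_1 \cdots x_n) \to \Delta_\mathscr{C}\mathcal{G}$ (compatibility with concatenation is immediate from stacking the pieces), and surjectivity is clear since every generator of each vertex-group $G_I$ is hit by a single elementary piece.

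For injectivity I would argue that a reduced diagram is determined by its sequence of elementary pieces up to the shuffles of disjoint blocks, so that a diagram mapping to the trivial word must, after reducing all dipoles, have no pieces at all, i.e.\ be the trivial diagram; this uses the confluence of dipole reduction (the analogue of \cite[Lemma~2.2]{MR2136028} quoted in the excerpt) together with the uniqueness of graphically reduced words up to shuffling. Once Theorem~\ref{thm:GraphProduct} is established in this form, combining it with Theorem~\ref{thm:Combination} (applied with all vertex-groups equal to $\mathbb{Z}$, after using Lemma~\ref{lem:DisjointGraph} to realise a given graph as a disjointness graph) yields Theorem~\ref{thm:MainIntro}.

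\textbf{Main obstacle.} The delicate point is the structural decomposition of a reduced diagram into an essentially unique stack of elementary ``expand--contract'' pieces, and in particular controlling exactly which reorderings of these pieces are possible. One must rule out more complicated configurations — e.g.\ an $a_I$-wire whose endpoints are not matched by a creation/annihilation pair of the same relation, or transistors of different relations sharing wires in a way not captured by the piece decomposition — and this requires a careful case analysis of what a reduced diagram over $\mathcal{P}_\mathscr{C}$ with baseword $x_1\cdots x_n$ can look like, exploiting heavily that each $x_i$ occurs exactly once in the baseword and that the relations have the special ``merge a set into a point'' form. Matching the three diagram moves (cancellation, amalgamation, shuffling) on pieces with exactly the three moves on words of $\Delta_\mathscr{C}\mathcal{G}$, and invoking the uniqueness-up-to-shuffling of graphically reduced words to get injectivity, is then the natural but bookkeeping-heavy conclusion.
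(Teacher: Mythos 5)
Your proposal is correct and lands on exactly the isomorphism the paper uses: each syllable $g \in G_I$ corresponds to the ``expand--contract'' piece consisting of a transistor labelled $x_{i_1}\cdots x_{i_k}=a_I$, a wire labelled $(a_I,g)$, and a transistor labelled $a_I=x_{i_1}\cdots x_{i_k}$. The difference is purely organizational, but it is worth noting because it shifts where the work lies. You build the map from diagrams to words, which forces you to prove up front the global structural decomposition of a reduced diagram into an essentially unique stack of such pieces --- the step you yourself flag as the main obstacle, and the one whose full justification (ruling out unmatched $a_I$-wires, controlling exactly which reorderings are possible, well-definedness of the resulting word) is the bulk of the bookkeeping. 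The paper goes the other way: it defines $\Theta(g)$ on generators, checks the two relations ($\Theta(g)\circ\Theta(h)\equiv\Theta(gh)$ within a vertex-group, and commutation for adjacent vertices via disjoint wire supports), and then proves injectivity by the contrapositive ``if $\Theta(g_1)\circ\cdots\circ\Theta(g_m)$ has a dipole then the word was not graphically reduced'' and surjectivity by peeling: any non-trivial reduced diagram has a topmost transistor $R$ labelled $x_{i_1}\cdots x_{i_k}=a_I$ whose $a_I$-wire can only feed a matching transistor $S$ (since $a_I$ appears in no other relation), so $R\cup S=\Theta(g)$ and one inducts on the number of transistors of $\Theta(g)^{-1}\circ\Delta$. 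This local peeling argument delivers exactly the consequence of your decomposition lemma that is actually needed, without ever stating or proving the decomposition itself, and it also absorbs your well-definedness and injectivity checks into the standard ``reduced words map to reduced diagrams'' scheme. Both routes are sound; if you carry yours out, the case analysis you describe (each $x_i$ occurs once in the baseword, every relation merges a set of distinct $x_i$'s into a single $a_I$) does close the gap, but you would save effort by proving only the peeling statement rather than the full stack decomposition.
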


\begin{proof}
For convience, set $w:=x_1 \cdots x_n$. For every vertex $I= \{x_{i_1}< \cdots < x_{i_k}\} \in \Delta_\mathscr{C}$ and every element $g \in G_I$, let $\Theta(g)$ denote the $(w,w)$-diagram obtained by connecting the wires $x_{i_1}, \ldots, x_{i_k}$ to a transistor $T$ labelled by $x_{i_1} \cdots x_{i_k} =a_I$, by labelling the bottom wire of $T$ by $(a_I,g)$, which we connect to a transistor labelled by $a_I=x_{i_1} \cdots x_{i_k}$. See Figure~\ref{Theta}. 
\begin{figure}[h!]
\begin{center}
\includegraphics[width=0.8\linewidth]{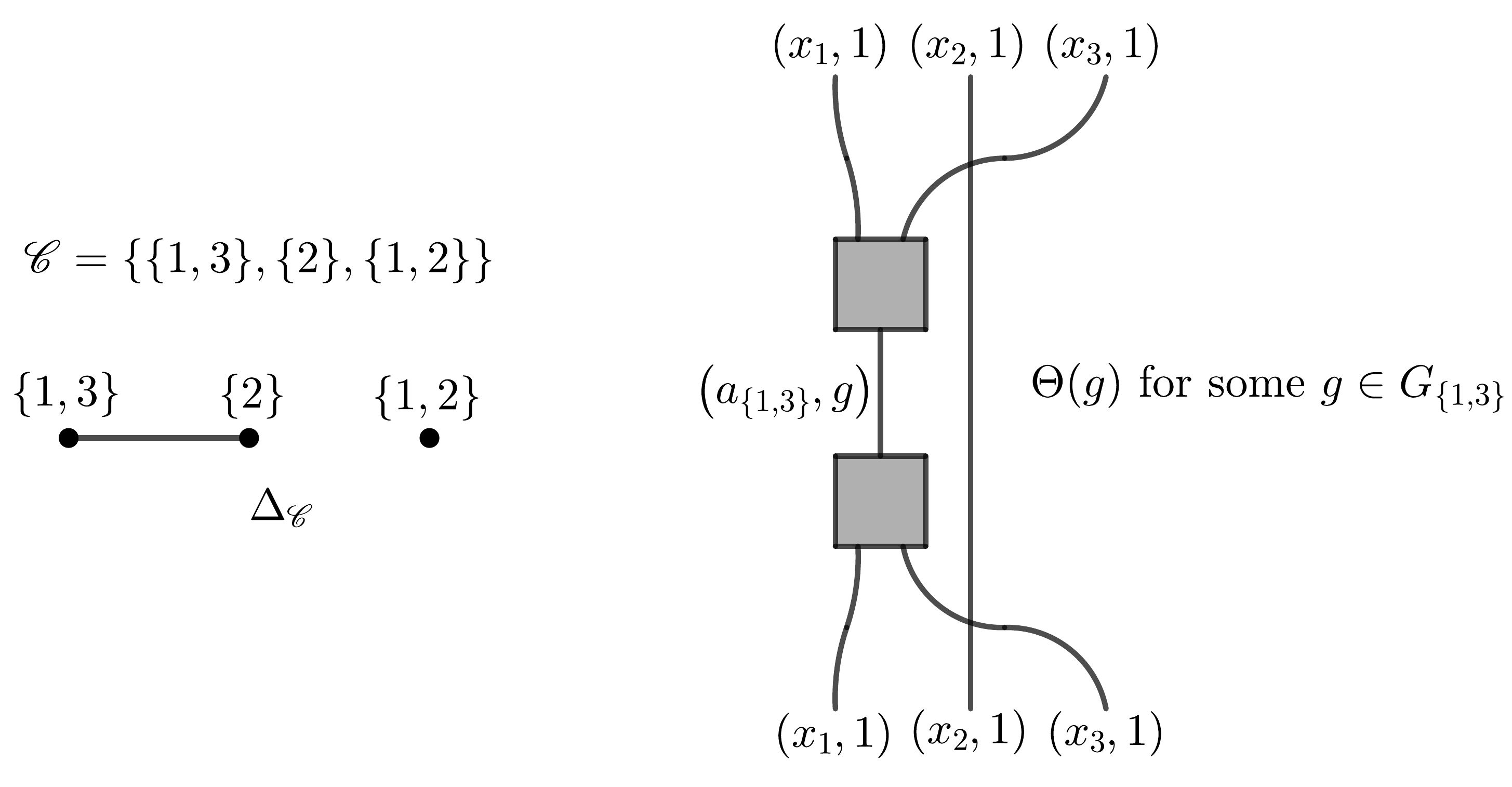}
\caption{Example of a $\Theta(g)$.}
\label{Theta}
\end{center}
\end{figure}

\noindent
Observe that:
\begin{itemize}
	\item for every $I \in \Delta_\mathscr{C}$ and for all $g,h \in G_I$, $\Theta(g) \circ \Theta(h) \equiv \Theta(gh)$;
	\item for all adjacent vertices $I,J \in \Delta_\mathscr{C}$ and all elements $g \in G_I$, $h \in G_J$, we have $\Theta(g) \circ \Theta(h) = \Theta(h) \circ \Theta(g)$ because the transistors of the two diagrams are connected to two disjoint sets of wires (respectively labelled by $x_i$ for $i \in I$ and $x_j$ for $j \in J$). 
\end{itemize}
It follows that $\Theta$ extends to a morphism $\Delta_\mathscr{C} \mathcal{G} \to D(\mathcal{P}_\mathscr{C}, \mathcal{H},w)$ by sending a word in generators $g_1 \cdots g_n$ to $\Theta(g_1) \circ \cdots \circ \Theta(g_n)$.

\medskip \noindent
Fix a word in generators $g=g_1 \cdots g_n$. If $\Theta(g)=\Theta(g_1) \circ \cdots \circ \Theta(g_n)$ contains a dipole, two cases may occur. First, there may exist some $1 \leq i \leq n$ such that $\Theta(g_i)$ contains a dipole, which amounts to saying that $g_i=1$. Otherwise, if all the $\Theta(g_i)$ are reduced, then there must exist $1 \leq i < j \leq n$ such that our dipole contains one transistor in $\Theta(g_i)$ and the other in $\Theta(g_j)$. This implies that $g_i$ and $g_j$ belong to the same vertex-group of $\Delta_\mathscr{C}\mathcal{G}$, say $G_I$ with $I \in \mathscr{C}$, and that the \emph{captive} wires (i.e.\ the top and bottom wires connected to a transistor) of the $\Theta(g_\ell)$ for $i < \ell < j$ are connected to the \emph{free} (i.e.\ not captive) wires of $\Theta(g_i)$ and $\Theta(g_j)$. This amounts to saying that each $g_\ell$ for $i<\ell<j$ belongs to a vertex-group $G_L$ with $L$ disjoint from $I$. Thus, in our word $g_1 \cdots g_n$, the syllable $g_j$ can be shuffled next to $g_i$ and finally merged with $g_j$. 

\medskip \noindent
Our argument shows that, if $\Theta(g)$ is not reduced, then $g_1 \cdots g_n$ is not graphically reduced. Consequently, $\Theta$ sends a (non-empty) graphically reduced word in generators to a (non-trivial) reduced diagram. This implies that $\Theta$ is an injective morphism. 

\medskip \noindent
It remains to show that $\Theta$ is surjective. So let $\Delta \in D(\mathcal{P}_\mathscr{C},\mathcal{H},w)$ be a reduced diagram. If $\Delta$ is trivial, then $\Theta(1)= \Delta$. From now on, we assume that $\Delta$ is non-trivial. Necessarily, $\Delta$ contains a transistor $R$ labelled by $x_{i_1} \cdots x_{i_k}=a_I$ for some $I=\{i_1< \cdots < i_k\} \in \mathscr{C}$ such that the top wires of $R$ are also top wires of $\Delta$. The bottom wire of $R$, whose label has first coordinate $a_I$, must be connected to a transistor $S$ labelled by $a_I=x_{i_1} \cdots x_{i_k}$. Thus, the union of $R$ and $S$ coincides with $\Theta(g)$ where $g \in G_I$ is such that the wire connecting $R$ and $S$ is labelled by $(a_I,g)$. Then, the reduction of $\Theta(g)^{-1} \circ \Delta$ has less transistors than $\Delta$. By iterating the argument, we conclude that $\Delta$ belongs to the image of $\Theta$.
\end{proof}

\begin{proof}[Proof of Theorem~\ref{thm:MainIntro}.]
The theorem now follows from Theorem~\ref{thm:GraphProduct}, Lemma~\ref{lem:DisjointGraph}, and Theorem~\ref{thm:Combination}. 
\end{proof}

\section{The pure virtual twin group is a braided diagram group}

The purpose of this section is to give a new perspective to the pure virtual twin group by viewing them as symmetric diagram groups. We recall the preliminaries in the subsequent section.

\subsection{Virtual twin groups}
The {\it virtual twin group} $VT_n$, $n \ge 2$, is generated by the set $\{ s_1, s_2, \ldots, s_{n-1}, \rho_1, \rho_2, \ldots, \rho_{n-1}\}$ with defining relations
\begin{eqnarray}
s_i^{2} &=&1 \hspace*{5mm} \textrm{for } i = 1, 2, \dots, n-1, \label{1}\\ 
s_is_j &=& s_js_i \hspace*{5mm} \textrm{for } |i - j| \geq 2,\label{2}\\
\rho_i^{2} &=& 1 \hspace*{5mm} \textrm{for } i = 1, 2, \dots, n-1, \label{3}\\
\rho_i\rho_j &=& \rho_j\rho_i \hspace*{5mm} \textrm{for } |i - j| \geq 2, \label{4}\\
\rho_i\rho_{i+1}\rho_i &=& \rho_{i+1}\rho_i\rho_{i+1}\hspace*{5mm} \textrm{for } i = 1, 2, \dots, n-2, \label{5}\\
\rho_is_j &=& s_j\rho_i \hspace*{5mm} \textrm{for } |i - j| \geq 2, \label{6}\\
\rho_i\rho_{i+1} s_i &=& s_{i+1} \rho_i \rho_{i+1}\hspace*{5mm} \textrm{for } i = 1, 2, \dots, n-2. \label{7}
\end{eqnarray}

In particular,  $VT_2 \cong \mathbb{Z}_2 * \mathbb{Z}_2$, the infinite dihedral group. There is a natural surjection $\pi:VT_n  \to S_n$ given by
$$\pi(s_i) = \pi(\rho_i) = (i, i+1)$$
for all $1\leq i \leq n-1$. The kernel $PVT_n$ of this surjection is called the \textit{pure virtual twin group}. The group $PVT_n$ is an analogue of the pure virtual braid group. The map $S_n \to VT_n$ given by $(i, i+1)\mapsto \rho_i$ is a splitting of the short exact sequence
$$1 \to PVT_n \to VT_n \to S_n \to 1,$$
and hence $VT_n= PVT_n \rtimes S_n$. Let 
$$\lambda_{i, i+1}= s_i \rho_i,$$
for each $1 \le i \le n-1$ and
$$\lambda_{i,j} = \rho_{j-1} \rho_{j-2} \dots \rho_{i+1} \lambda_{i, i+1} \rho_{i+1} \dots \rho_{j-2}  \rho_{j-1},$$
for each $1 \leq i < j \leq n$ and $j \ne i+1$.
The following result \cite{Structure-Automorphisms-Pure-Virtual-Twin-Groups} describes the structure of $PVT_n$.
\begin{thm}\label{pvtn-right-angled-artin}
The pure virtual twin group $PVT_n$ on  $n \ge 2 $ strands is an irreducible right-angled Artin group and is presented by
$$\big\langle \lambda_{i,j},~1 \leq i < j \leq n ~|~ \lambda_{i,j} \lambda_{k,l} =  \lambda_{k,l} \lambda_{i,j} \text{ for distinct integers } i, j, k, l \big\rangle.$$
\end{thm}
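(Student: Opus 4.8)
The plan is to compute a presentation of $PVT_n$ directly from the presentation \eqref{1}--\eqref{7} of $VT_n$ by the Reidemeister--Schreier method, using the splitting $VT_n = PVT_n \rtimes S_n$ already recorded above. Concretely, the section $(i,i+1)\mapsto\rho_i$ identifies $S_n$ with the complement $R := \langle\rho_1,\ldots,\rho_{n-1}\rangle$, which therefore meets $PVT_n$ trivially; fixing a prefix-closed family of reduced words in the $\rho_i$ representing the elements of $S_n$ gives, inside $VT_n$, a Schreier transversal $T$ for $PVT_n$.

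Running the rewriting process, every Schreier generator attached to a letter $\rho_i$ is a word in the $\rho_j$ lying in $R\cap PVT_n = 1$, hence trivial, while the one attached to a pair $(t,s_i)$ simplifies via $s_i = \lambda_{i,i+1}\rho_i$ to the conjugate $t\lambda_{i,i+1}t^{-1}$. A short induction on the defining formula for $\lambda_{i,j}$ shows that conjugating $\lambda_{i,i+1}$ by a permutation $\tau$ produces $\lambda_{\tau(i),\tau(i+1)}$ (or its inverse, according to the order of $\tau(i)$ and $\tau(i+1)$), so $PVT_n$ is generated by $\{\lambda_{i,j}:1\le i<j\le n\}$. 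It then remains to rewrite each relator \eqref{1}--\eqref{7}, conjugated by every $t\in T$, in these generators. Relations \eqref{3}, \eqref{4}, \eqref{5} are words in the $\rho_i$ and contribute nothing; tracking cosets, relations \eqref{1} and \eqref{6} each yield a $\lambda$ times its inverse and so contribute nothing; and relation \eqref{2} yields exactly the commutators $[\lambda_{i,j},\lambda_{k,l}]=1$ over all pairs with $\{i,j\}\cap\{k,l\}=\emptyset$. The one genuinely delicate relator is \eqref{7}, $\rho_i\rho_{i+1}s_i = s_{i+1}\rho_i\rho_{i+1}$: I expect that, because it has a single $s$-syllable on each side, it rewrites again to a $\lambda$ times its inverse and therefore collapses. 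This is exactly where it matters that $VT_n$ is of Coxeter rather than Artin type: in the pure virtual braid group the corresponding computation instead invokes the braid relation $\sigma_i\sigma_{i+1}\sigma_i = \sigma_{i+1}\sigma_i\sigma_{i+1}$, with three $\sigma$-syllables, and that is what produces the classical ternary ``mixed'' relation, whereas $VT_n$ has no braid relation among the $s_i$. Hence only the commutators survive, giving the stated presentation, which is visibly the right-angled Artin group on the graph $\Gamma_n$ with vertex set $\binom{[n]}{2}$ in which two $2$-subsets are joined by an edge precisely when they are disjoint.

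For irreducibility I would invoke the standard fact that a right-angled Artin group $A_\Gamma$ is directly indecomposable if and only if $\Gamma$ is not a nontrivial join, equivalently the complement $\overline{\Gamma}$ is connected. Here $\overline{\Gamma_n}$ is the triangular graph (the line graph of $K_n$), in which two $2$-subsets are adjacent exactly when they meet; this is connected for every $n\ge2$ --- it is a single vertex when $n=2$, and then $PVT_2\cong\mathbb{Z}$ is already a right-angled Artin group --- so $PVT_n$ is irreducible for all $n\ge2$. The main obstacle is the Reidemeister--Schreier bookkeeping, and within it the verification that relator \eqref{7} collapses; the remaining case analysis is routine.
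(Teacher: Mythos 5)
The paper does not actually prove this theorem: it is imported verbatim from \cite{Structure-Automorphisms-Pure-Virtual-Twin-Groups}, so there is no internal proof to compare against. Your Reidemeister--Schreier outline is nevertheless the standard route to this presentation, it is essentially how the cited source proceeds, and as far as I can check every step of it goes through. Two places deserve more care than ``contribute nothing''. First, relation \eqref{1} is not vacuous: rewritten at a coset $t$ with $\tau=\pi(t)$ it gives $\gamma(t,s_i)\,\gamma(t\rho_i,s_i)=1$, i.e.\ the identification $\lambda_{b,a}=\lambda_{a,b}^{-1}$ for $\{a,b\}=\{\tau(i),\tau(i+1)\}$, and these identifications are exactly what allow you to cut the Schreier generating set down to $\{\lambda_{i,j}\colon i<j\}$; so ``contributes no relation among the final generators'' is the accurate formulation. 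Second, relation \eqref{7} is not merely observed to collapse --- it is consumed in the proof of your key conjugation lemma: the steps $\rho_{i+1}\lambda_{i,i+1}\rho_{i+1}=\lambda_{i,i+2}$ and $\rho_{i-1}\lambda_{i,i+1}\rho_{i-1}=\lambda_{i-1,i+1}$ require \eqref{7}, while $\rho_i\lambda_{i,i+1}\rho_i=\lambda_{i,i+1}^{-1}$ and invariance under $\rho_j$ for $|i-j|\ge 2$ use \eqref{1}, \eqref{4} and \eqref{6}. Once that lemma is established (by induction on the length of $\tau$, with these four base cases), the rewriting of \eqref{7} at any coset is indeed of the form $XX^{-1}$, and \eqref{2} yields precisely the commutators of $\lambda$'s with disjoint index pairs, so the presentation is the stated right-angled Artin group on the disjointness graph $\Gamma_n$. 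Your irreducibility argument is also correct: the complement of $\Gamma_n$ is the line graph of $K_n$ (two $2$-subsets adjacent when they meet), which is connected for all $n\ge 2$, so $\Gamma_n$ is not a nontrivial join and $PVT_n$ does not split as a direct product.
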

These virtual twin groups are classical generalisation of twin groups, which are generated by the set $\{ s_1, s_2, \ldots, s_{n-1}\}$ whose generators satisfy the defining relations (\ref{1})--(\ref{2}), whose pure subgroup is denoted by $PT_n$.
Analogous to (virtual) braid groups \cite{MR2128049}, (virtual) twin groups have a nice diagrammatical interpretation which we describe in this section (see \cite{MR4209535} for more details).\\
Consider a set $Q$ of fixed $n$ points on the real line $\mathbb{R}$. A \textit{virtual twin diagram} on $n$ strands is a subset $D$ of the strip $\mathbb{R} \times [0,1]$ consisting of $n$ intervals called {\it strands} such that the boundary of $D$ is $Q  \times \{0,1\}$ and the following conditions are met:
\begin{enumerate}
\item the natural projection $\mathbb{R} \times [0,1] \to [0,1]$ maps each strand homeomorphically onto $[0,1]$. Informally, the strands are monotonic,
\item the set $V(D)$ of all crossings of the diagram $D$ consists of transverse double points of $D$ where each crossing is preassigned to be a real or a virtual crossing as shown below. A virtual crossing is depicted by a crossing encircled with a small circle.
\end{enumerate}

\begin{figure}[hbtp]
\centering
\includegraphics[scale=0.4]{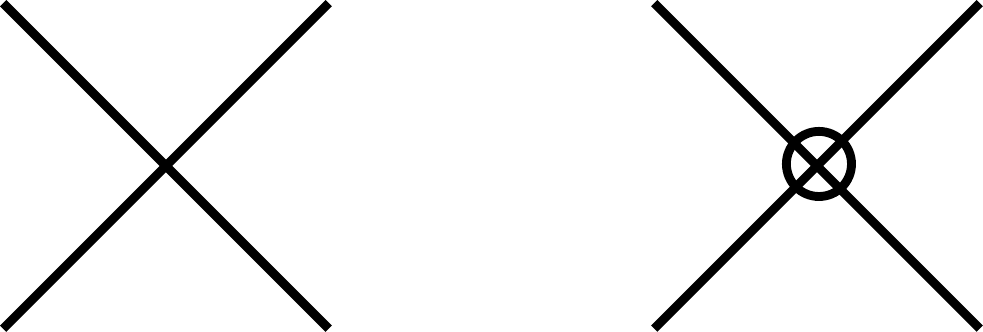}
\caption{Real and virtual crossing}
\label{Crossings}
\end{figure}

We say that two virtual twin diagrams on $n$ strands are said to be \textit{equivalent} if one can be obtained from the other by isotopies of the plane and a finite sequence of planar Reidemeister moves as shown in Figure [\ref{ReidemeisterMoves}]. 
\begin{figure}[hbtp]
\centering
\includegraphics[scale=0.2]{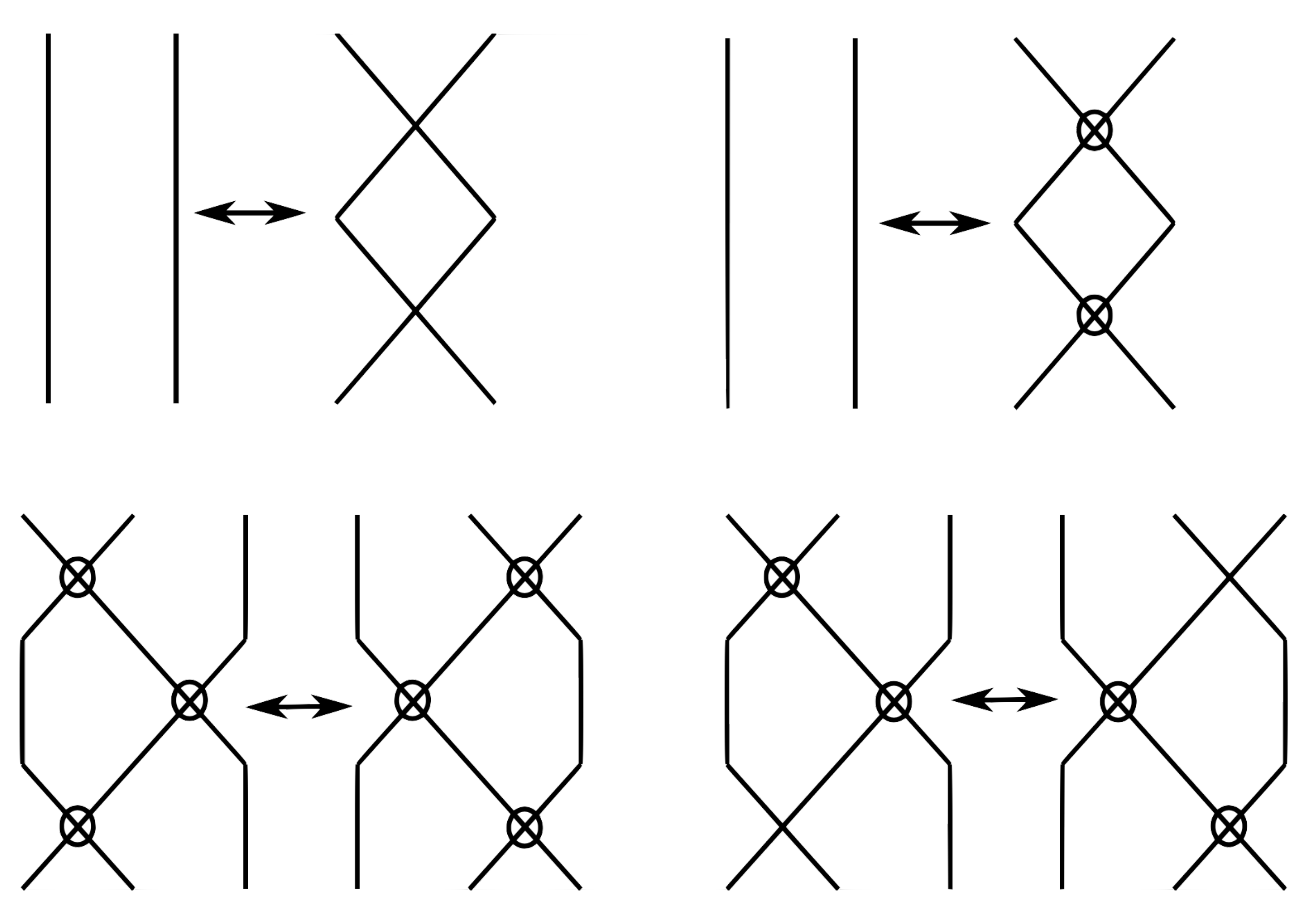}
\caption{Reidemeister moves for virtual twin diagrams}
\label{ReidemeisterMoves}
\end{figure}

A \textit{virtual twin} is then defined as an equivalence class of such virtual twin diagrams. The product $D_1D_2$ of two virtual twin diagrams $D_1$ and $D_2$ is defined by placing $D_1$ on top of $D_2$ and then shrinking the interval to $[0,1]$. It is not difficult to check that this is a well-defined binary operation on the set of all virtual twins on $n$ strands. This set of all virtual twins on $n$ strands forms a group which is isomorphic to the abstractly defined group $VT_n$. The generators $s_i$ and $\rho_i$ of $VT_n$ can be represented as in Figure [\ref{generator-vtn}]. This approach of defining virtual twin groups is crucial in viewing pure virtual twin group as a symmetric diagram group, which we prove in the following subsection.

\begin{figure}[hbtp]
\centering
\includegraphics[scale=0.2]{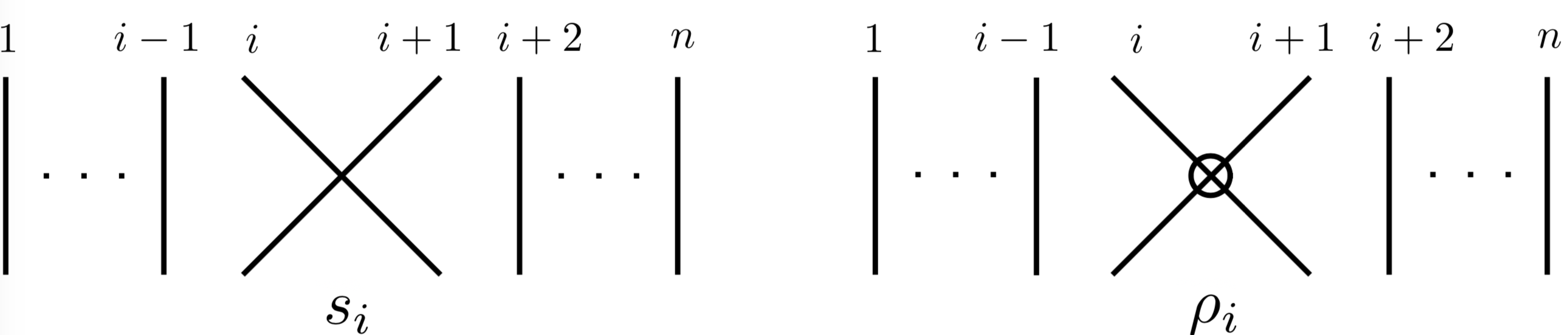}
\caption{Generator $s_i$ and $\rho_i$}
\label{generator-vtn}
\end{figure}

\subsection{Main theorems}
We first start by proving that the group $PVT_n$ is not a planar diagram group. 

\begin{thm}\label{thm:NotPlanar}
The group $PVT_n$ is a planar diagram group if and only if $n \leq 4$.
\end{thm}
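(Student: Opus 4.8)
The plan is to reduce the statement to a property of the defining graph of the right-angled Artin group $PVT_n$. By Theorem~\ref{pvtn-right-angled-artin}, $PVT_n$ is the right-angled Artin group on the graph $\Gamma_n$ whose vertices are the $2$-element subsets of $[n]$, two of them being adjacent exactly when they are disjoint; in the language of Section~3 this is the disjointness graph $\Delta_\mathscr{C}$ with $\mathscr{C}=\binom{[n]}{2}$. So it suffices to analyse $\Gamma_n$, and we do so differently in the two ranges of $n$.

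For the forward implication (if $n\le 4$ then $PVT_n$ is a planar diagram group) I would describe $\Gamma_n$ case by case. For $n=2$ it is a single vertex, so $PVT_2\cong\mathbb{Z}$. For $n=3$ any two $2$-subsets of $[3]$ share an element, so $\Gamma_3$ has no edge and $PVT_3\cong F_3$. For $n=4$ two $2$-subsets of $[4]$ are disjoint precisely when they are complementary, so $\Gamma_4$ is a perfect matching on six vertices and $PVT_4\cong\mathbb{Z}^2\ast\mathbb{Z}^2\ast\mathbb{Z}^2$. One then concludes by recalling the standard facts that $\mathbb{Z}$, $\mathbb{Z}^2$ and finitely generated free groups are planar diagram groups, and that the class of planar diagram groups is closed under free products \cite{MR1396957} (see also \cite{Diagram-Groups-Genevois}). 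Note that the symmetric-diagram machinery of the previous sections does not help here, since Theorems~\ref{thm:Combination} and \ref{thm:GraphProduct} produce \emph{symmetric} diagram groups, whereas we need \emph{planar} ones.

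For the converse (if $n\ge 5$ then $PVT_n$ is not a planar diagram group) the crucial step is a short combinatorial observation: $\Gamma_n$ contains an induced cycle of length $5$. Concretely, for every $n\ge 5$ the five vertices $\{1,2\}$, $\{3,4\}$, $\{1,5\}$, $\{2,3\}$, $\{4,5\}$ span, in this cyclic order, an induced copy of $C_5$ — consecutive pairs are disjoint, while the five non-consecutive pairs each meet in exactly one point. Having exhibited an induced cycle of (odd) length $5$ in $\Gamma_n$, the result of Guba--Sapir \cite{MR1725439} — which asserts that a right-angled Artin group whose defining graph contains an induced cycle of length at least $5$ is not a planar diagram group — gives that $PVT_n=A(\Gamma_n)$ is not a planar diagram group.

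The point that requires care, and the main obstacle, is to be precise about exactly what is being extracted from \cite{MR1725439}. We want the obstruction in its ``induced subgraph'' form, namely that an induced cycle of length $\ge 5$ in the defining graph already prevents a right-angled Artin group from being a planar diagram group — this is the statement established there, the converse direction being open (so there is no complete classification to appeal to). If instead only ``$A(C_m)$ is not a planar diagram group for odd $m\ge 5$'' were available, one would additionally need to argue that this failure is inherited by the special (retract) subgroup $A(C_5)\le A(\Gamma_n)$, which is not automatic, since the class of diagram groups is not known to be closed under retracts. I would therefore state explicitly the induced-subgraph obstruction being quoted, observe that an induced odd cycle already suffices here, and remark that this is the same mechanism underlying Corollary~\ref{Symmetric-Not-Planar}.
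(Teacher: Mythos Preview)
Your proposal is correct and follows essentially the same approach as the paper: identify $PVT_n$ for $n\le 4$ explicitly as $\mathbb{Z}$, $\mathbb{F}_3$, and $\mathbb{Z}^2\ast\mathbb{Z}^2\ast\mathbb{Z}^2$ (all well-known planar diagram groups), and for $n\ge 5$ exhibit an induced $C_5$ in the defining graph $\Gamma_n$ and invoke the Guba--Sapir obstruction from \cite{MR1725439}. Your explicit $5$-cycle $\{1,2\},\{3,4\},\{1,5\},\{2,3\},\{4,5\}$ is correct (and in fact is listed in a cyclic order where consecutive vertices are adjacent; the paper's five vertices give the same $C_5$, just enumerated along the complementary cyclic order).

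One small point: you phrase the obstruction as ``induced cycle of length at least $5$'', whereas the paper (following \cite[Corollary~3.19]{Diagram-Groups-Genevois} and the proof of \cite[Theorem~30]{MR1725439}) states it for induced cycles of \emph{odd} length $\ge 5$. Since you are exhibiting a $C_5$ this makes no difference to the argument, but you should quote the result in the form actually available. Your careful remark that one needs the induced-subgraph version of the obstruction (rather than merely ``$A(C_5)$ is not a planar diagram group'') is well taken and worth keeping.
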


\begin{proof}
It follows from Theorem~\ref{pvtn-right-angled-artin} that $PVT_2 \simeq \mathbb{Z}$, $PVT_3 \simeq \mathbb{F}_3$, and $PVT_4 \simeq \mathbb{Z}^2 \ast \mathbb{Z}^2 \ast \mathbb{Z}^2$. All these groups are planar diagram groups. Now, fix an integer $n \geq 5$. According to Theorem~\ref{pvtn-right-angled-artin}, $PVT_n$ is isomorphic to the right-angled Artin group $A(\Gamma_n)$, where $\Gamma_n$ is the graph whose vertices are the pairs of integers $\{i<j\}$ in $[1,n]$ and whose edges connect two pairs whenever they are disjoint. As mentioned in \cite[Corollary~3.19]{Diagram-Groups-Genevois}, the proof of \cite[Theorem~30]{MR1725439} shows that a right-angled Artin group whose defining graph contains an induced cycle of odd length $\geq 5$ cannot be a planar diagram group. Therefore, it suffices to exhibit in $\Gamma_n$ an induced cycle of length $5$. The vertices $\{1,2\}$, $\{2,3\}$, $\{3,5\}$, $\{4,5\}$, $\{1,4\}$ define such a cycle.
\end{proof}

\begin{thm}
Let $\mathcal{P}_n= \langle x_1, x_2, \dots, x_n ~|~ x_i x_j =x_j x_i \text{ for all } i \neq j \rangle$ be a semigroup presentation. The symmetric diagram group $D_s(\mathcal{P}_n,x_1x_2 \cdots x_n )$ is isomorphic to the pure virtual twin group $PVT_n$. 
\end{thm}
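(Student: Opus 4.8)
The plan is to set up an explicit bijection between reduced symmetric $(w,w)$-diagrams over $\mathcal{P}_n$ (where $w = x_1 \cdots x_n$) and elements of $PVT_n$, using the diagrammatic description of $PVT_n$ via monotonic strands with real and virtual crossings recalled in the previous subsection. The key observation is that since every relation of $\mathcal{P}_n$ has the form $x_ix_j = x_jx_i$, every transistor in a diagram over $\mathcal{P}_n$ has exactly two wires on top and two on the bottom, and simply transposes their labels. Reading a reduced diagram from top to bottom, one sees $n$ wires starting in the order $x_1, \ldots, x_n$ and ending in the order $x_1, \ldots, x_n$ (since it is a $(w,w)$-diagram), with the transistors recording "local swaps"; because wires in a symmetric diagram are allowed to cross freely, each such diagram is naturally a monotonic string of $n$ strands decorated with two kinds of crossings — a transistor gives a "real" crossing, and a genuine intersection of two wires (without a transistor) gives a "virtual" crossing. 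I would make this correspondence precise by choosing a generic vertical-monotone planar representative of the diagram, declaring the wire labelled $x_i$ to be the $i$-th strand, marking transistors as real crossings and wire-wire intersections as virtual crossings, and checking that the total permutation is trivial, so that the result is a \emph{pure} virtual twin diagram.

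Next I would verify that this assignment is well-defined and a homomorphism. Well-definedness has two layers: independence of the chosen planar representative (two representatives differ by an ambient isotopy fixing the frame, which translates into planar isotopies plus the virtual Reidemeister moves — the "detour move" in particular — so the virtual twin class is unchanged), and invariance under dipole reduction (a dipole is exactly two adjacent transistors on the same pair of wires whose composite swap is trivial, which corresponds precisely to cancelling a real Reidemeister-II pair $s_i^2 = 1$; one must also check that an $a=ap$-type degeneracy cannot occur here since $\mathcal{P}_n$ has no relations changing word length). That the map respects concatenation is immediate, since stacking diagrams corresponds to stacking virtual twin diagrams. Conversely, from a pure virtual twin diagram one builds a symmetric $(w,w)$-diagram by putting a transistor at each real crossing and letting strands cross freely at each virtual crossing; one checks this descends through the Reidemeister moves defining virtual twins, giving a two-sided inverse at the level of groups.

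The main obstacle I expect is the bookkeeping around the Reidemeister moves — specifically, confirming that the complete list of moves for virtual twin diagrams (planar isotopy, the purely-virtual moves, the mixed move relating $\rho_i\rho_{i+1}s_i$ and $s_{i+1}\rho_i\rho_{i+1}$, and the real move $s_i^2=1$) corresponds exactly, move by move, to the equivalence relation on symmetric diagrams generated by ambient homeomorphism and dipole reduction, with \emph{nothing extra on either side}. In particular I must rule out a classical Reidemeister-III type move among real crossings (there is none in the twin setting, which is why $PT_n$ is a right-angled Coxeter group rather than the braid group), and I must check that the mixed move is forced by the freedom to slide a transistor past a wire-crossing. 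An alternative, and perhaps cleaner, route that sidesteps some of this is to compare presentations directly: show the map sends the standard generator $\lambda_{i,i+1} = s_i\rho_i$ of $PVT_n$ (from Theorem~\ref{pvtn-right-angled-artin}) to the $(w,w)$-diagram consisting of a single transistor swapping wires $i$ and $i+1$ followed by a free re-crossing of those wires, verify the defining relations $\lambda_{i,j}\lambda_{k,l} = \lambda_{k,l}\lambda_{i,j}$ hold among the corresponding diagrams (disjoint index sets give diagrams supported on disjoint wires, hence commuting), and then argue surjectivity by a normal-form/counting argument on reduced diagrams as in the proof of Theorem~\ref{thm:GraphProduct}. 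I would present the diagrammatic bijection as the main argument since it makes "the symmetric aspect evidently visible," and fall back on the presentation comparison to nail down injectivity.
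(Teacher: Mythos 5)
Your proposal follows essentially the same route as the paper's proof: identify real crossings with $(x_ix_j,x_jx_i)$-transistors and virtual crossings with free intersections of wires, then match the planar Reidemeister moves for virtual twins against dipole reduction and isotopy of wires (including sliding a wire past a transistor). The paper carries this out pictorially and somewhat more informally than you propose; your extra care about the move-by-move correspondence and the fallback comparison of presentations are refinements of, not departures from, the same argument.
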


\begin{proof}
Consider a pure virtual twin $b$, we stack the twin in a frame such that the strings are labelled $x_1, x_2, \dots , x_n$ from left to right, and the ends of the strings connect the top of the frame with the bottom of the frame. With abuse of notation, we denote the top contacts (also bottom contacts) of frame, reading from the left to right, by $x_1, x_2, \dots , x_n$. Since $b \in PVT_n$, this configuration is a well-defined operation as shown in figure below. 

\begin{figure}[hbtp]
\centering
\includegraphics[scale=0.15]{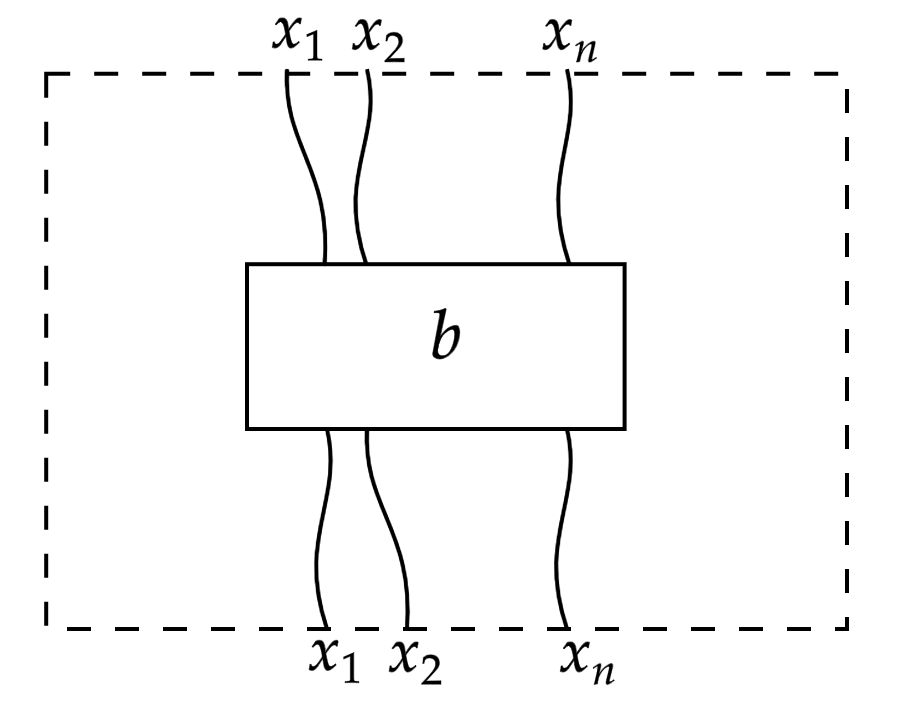}
\end{figure}

In the figure of $b$, there are real and virtual crossings, which we assume to be transversal, we will consider the virtual crossings to be the crossings amongst the wires (without loss of generality, we may assume that) and use the real crossings to be used to construct transistors. Consider a real crossing $c$ in which the $x_i$ strand crosses the strand $x_j$ from the left as we go down the twin diagram. Then we replace the crossing $c$ with $(x_ix_j, x_jx_i)$-transistor. It is to be noted here that the real crossings correspond to the transistors and the virtual crossings correspond to the intersection of wires in the symmetric diagram. For clarity of the proof, we encircle the intersection of wires. Doing this for all the real crossings, we get a $(x_1x_2 \cdots x_n, x_1x_2 \cdots x_n)$-symmetric diagram over the presentation $\mathcal{P}_n$. Conversely, consider any $(x_1x_2 \cdots x_n, x_1x_2 \cdots x_n)$-symmetric diagram over the presentation $\mathcal{P}_n$, then replace all the transistors with the real crossings, and we get a pure virtual twin diagram. \\
We now show that the map between $PVT_n$ and $D_s(\mathcal{P}_n,x_1x_2 \cdots x_n )$ is a bijection. The correspondence between the equivalence relations between twin diagrams and symmetric diagrams are shown in Figure \ref{fig:Proof-Illustration-II}.
\begin{figure}[hbtp]
\centering
\includegraphics[scale=0.1]{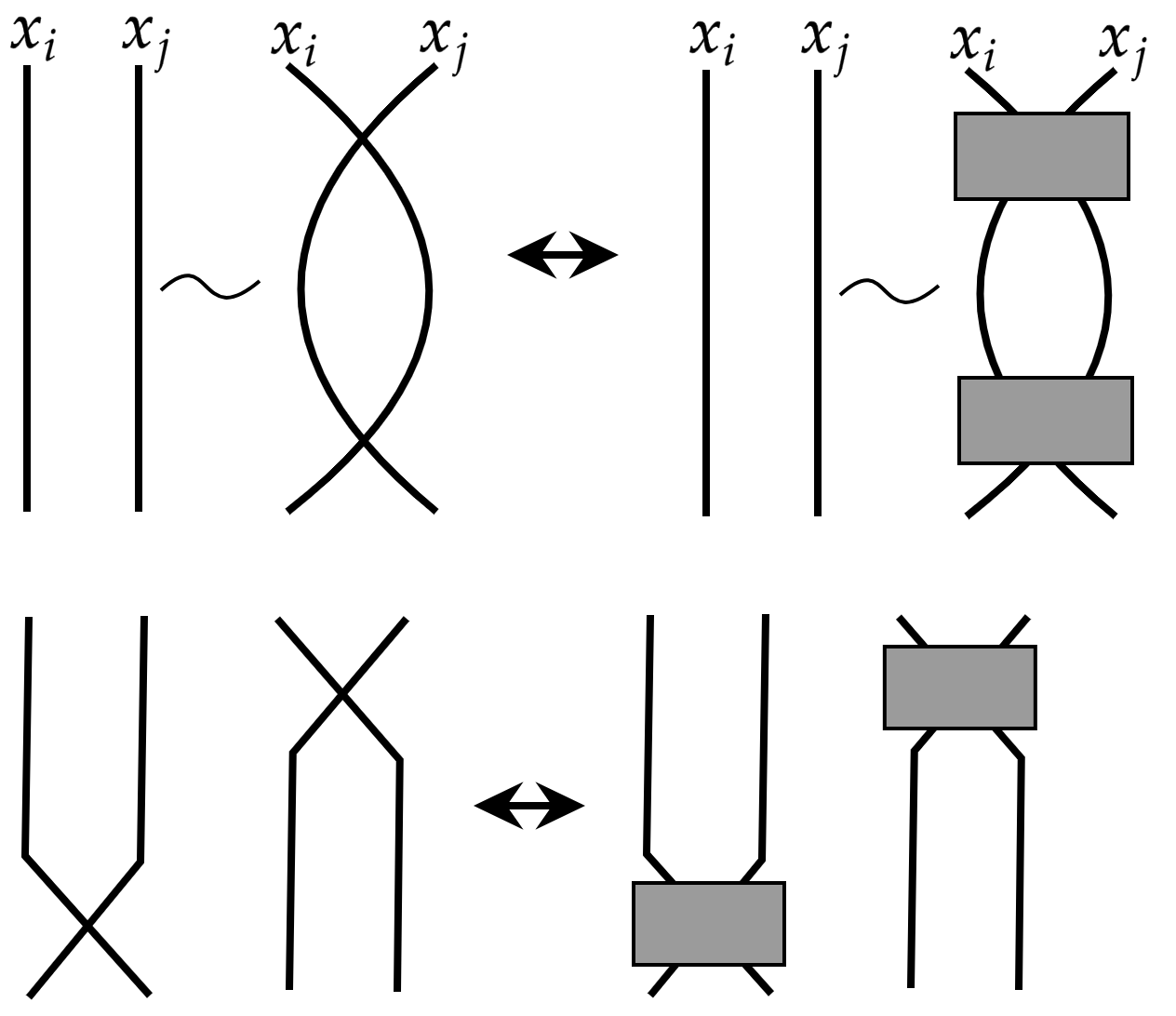}
\caption{}
\label{fig:Proof-Illustration-II}
\end{figure}

\begin{figure}[hbtp]
\centering
\includegraphics[scale=0.14]{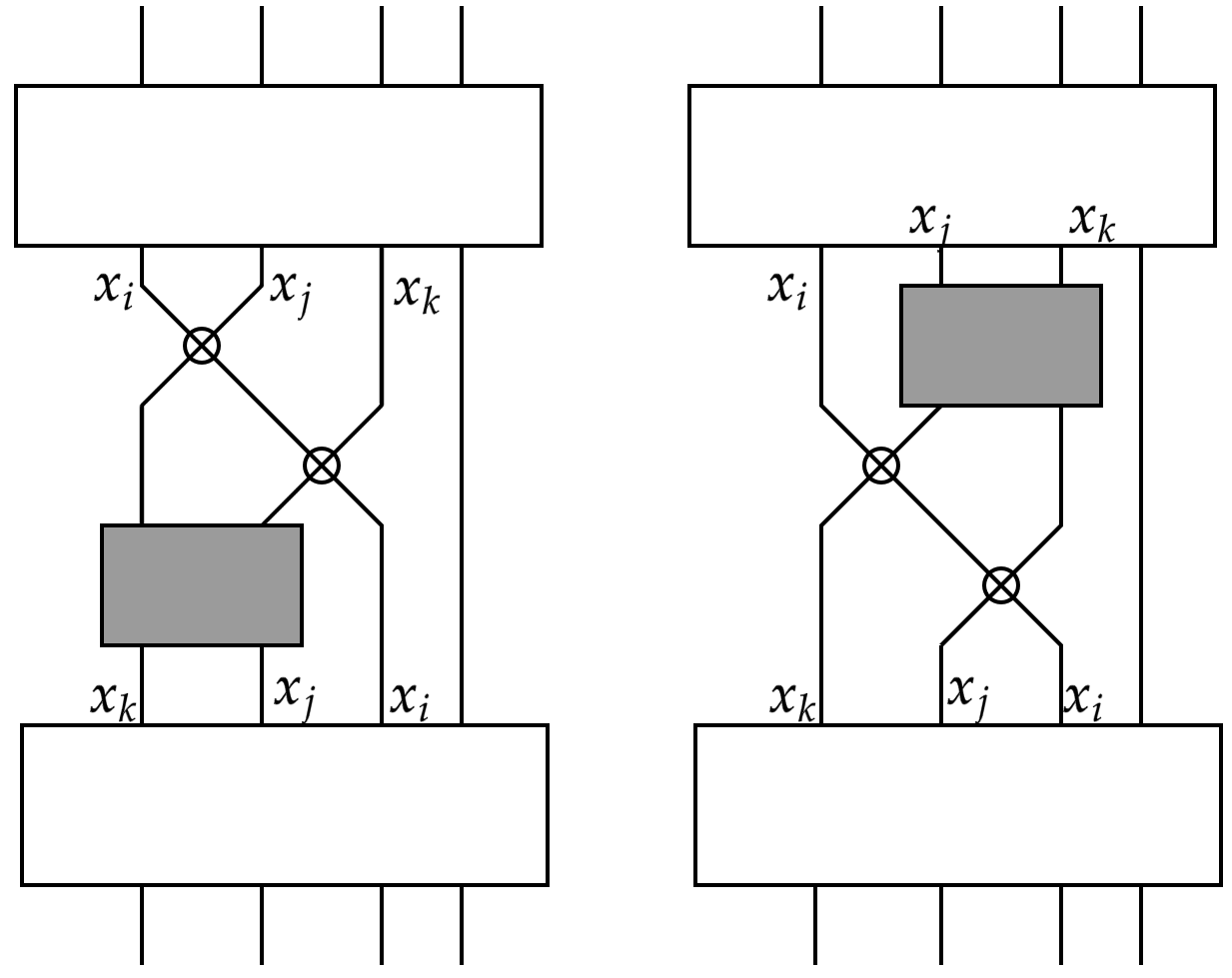}
\caption{}
\label{fig:Proof-Illustration}
\end{figure}

This implies that the real relations in twin diagrams are in correspondence with the equivalent modulo dipoles. Next, it is easy to that the virtual moves in twin diagrams correspond to the isotopy of wires. One of the moves is shown in Figure \ref{fig:Proof-Illustration}. It is crucial to remember that the definition allows that by isotopy a wire can intersect a transistor, since it does not alter the top and bottom contacts of the transistor. Thus, this map is indeed an isomorphism, and we are done. 
\end{proof}

We refer the readers \cite[Section 1.3]{Mostovoy-Round-Twin} definition of annular twin group and we get the following result.

\begin{cor}
The pure twin group embeds in the pure annular twin group which further embeds in the pure virtual twin group.
\end{cor}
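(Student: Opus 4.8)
The plan is to realize all three groups as diagram groups over a common or compatible semigroup presentation and to exhibit the embeddings as inclusions of sets of diagrams. For the first embedding, recall that the pure twin group $PT_n$ is a planar diagram group (by the results of Farley and others cited in the introduction), and that Farley proved $aPT_n$ is an annular diagram group. Since a planar diagram is in particular an annular diagram — this is exactly the inclusion $D_p(\mathcal{P},w) \subset D_a(\mathcal{P},w)$ recorded in Section~2.1 — it suffices to check that the planar diagram group presentation realizing $PT_n$ and the annular one realizing $aPT_n$ can be taken to be the same semigroup presentation with the same baseword. I would therefore first pin down (citing \cite{Twin-Group-Diagram-Group, article3}) the explicit semigroup presentation $\mathcal{P}$ and baseword $w$ for which $D_p(\mathcal{P},w) \cong PT_n$, verify that Farley's construction of $aPT_n$ uses the very same data, and conclude $PT_n \hookrightarrow aPT_n$ via $D_p(\mathcal{P},w) \subset D_a(\mathcal{P},w)$.

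For the second embedding $aPT_n \hookrightarrow PVT_n$, the strategy is parallel: use the inclusion $D_a(\mathcal{P},w) \subset D_s(\mathcal{P},w)$ together with the identification $D_s(\mathcal{P}_n, x_1 \cdots x_n) \cong PVT_n$ established in Theorem~\ref{Main-Theorem-Pure-Virtual-Twin}. Here the semigroup presentation in that theorem is $\mathcal{P}_n = \langle x_1, \ldots, x_n \mid x_ix_j = x_jx_i \ (i \neq j)\rangle$ with baseword $x_1 \cdots x_n$, so the key point is to check that Farley's annular presentation for $aPT_n$ is (isomorphic to) $D_a(\mathcal{P}_n, x_1 \cdots x_n)$, i.e.\ that the same commutation relations and the same baseword govern the annular twin diagrams. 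Diagrammatically this is natural: an annular twin diagram on $n$ strands has crossings that swap adjacent strands labelled $x_i, x_j$, matching the relations $x_ix_j = x_jx_i$, and the annular closure corresponds exactly to replacing the frame by a pair of circles as in the definition of annular diagram group. Once the presentations match, the composite $PT_n \hookrightarrow aPT_n \hookrightarrow PVT_n$ follows from $D_p \subset D_a \subset D_s$ over the fixed data.

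The main obstacle is the bookkeeping required to confirm that the three known descriptions — $PT_n$ as a planar diagram group, $aPT_n$ as an annular diagram group, and $PVT_n$ as $D_s(\mathcal{P}_n, x_1 \cdots x_n)$ — are all taken over mutually compatible semigroup presentations and basewords, so that the abstract inclusions $D_p(\mathcal{P},w) \subset D_a(\mathcal{P},w) \subset D_s(\mathcal{P},w)$ really do descend to the claimed group embeddings. In particular one must be careful that the presentation realizing $PT_n$ in \cite{Twin-Group-Diagram-Group, article3} and the presentation realizing $aPT_n$ in \cite[Theorem~2]{Twin-Group-Diagram-Group} agree (or at least that there is a presentation simultaneously realizing both), since a priori Farley could have used a different but equivalent semigroup presentation; reconciling these, possibly via the diagrammatic dictionary of the previous theorem (real crossings $\leftrightarrow$ transistors, virtual crossings $\leftrightarrow$ wire intersections), is the crux. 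If a direct match is not available, the fallback is to build explicit injective diagram maps by hand, expanding planar twin diagrams into annular ones and annular ones into symmetric diagrams over $\mathcal{P}_n$, and checking they respect concatenation and dipole reduction — but I expect the presentations to line up, making the proof essentially a one-line consequence of the established theorems plus the inclusions of Section~2.1.
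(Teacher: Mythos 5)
Your proposal is correct and follows essentially the same route as the paper: the paper's proof is exactly the one-line argument you anticipate, namely that $PT_n$ and $aPT_n$ are realized as planar and annular diagram groups over the presentation $\mathcal{P}_n$ with baseword $x_1\cdots x_n$, combined with Theorem~\ref{Main-Theorem-Pure-Virtual-Twin} and the chain of inclusions $D_p(\mathcal{P},w) \subset D_a(\mathcal{P},w) \subset D_s(\mathcal{P},w)$. The compatibility of presentations that you flag as the crux is indeed the only substantive point, and the paper simply asserts it by citing \cite{Twin-Group-Diagram-Group}.
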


\begin{proof}
It follows from the fact that the pure (annular) twin groups are (annular) planar diagram groups \cite{Twin-Group-Diagram-Group}, and that for semigroup presentation $\mathcal{P}_n= \langle x_1, x_2, \dots, x_n ~|~ x_i x_j =x_j x_i \text{ for all } i \neq j \rangle$ and word $w=x_1x_2 \cdots x_n \in \Sigma^+$, we have $D_p(\mathcal{P},w) \subset D_a(\mathcal{P},w) \subset D_s(\mathcal{P},w)$.
\end{proof}

\addcontentsline{toc}{section}{References}

\bibliographystyle{alpha}
{\footnotesize\bibliography{RAAGsDiag}}

\Address

%\addcontentsline{toc}{section}{Index}
%
%\printindex

\end{document}